\newtheorem{theorem}{Theorem}[section]
\newtheorem{lemma}[theorem]{Lemma}
\theoremstyle{definition}
\newtheorem{definition}[theorem]{Definition}
\newtheorem{remark}[theorem]{Remark}
\newcommand{\gu}{\mathrm{u}}
\newcommand{\gv}{\mathrm{v}}
\newcommand{\gw}{\mathrm{w}}
\def\mod{\mathop\mathrm{mod}\nolimits}
\newcommand{\Real}{\mathbb{R}}							
\newcommand{\Wholes}{\mathbb{Z}}							
\newcommand{\abs}[1]{\left\vert#1\right\vert}			
\newcommand{\sref}[1]{(\ref{#1})}                       
\newtheorem{thm}{Theorem}[section]
\newtheorem{cor}[thm]{Corollary}
\numberwithin{equation}{section}
\title{\sc Multichromatic travelling waves for lattice Nagumo equations}
\author[1]{Hermen Jan Hupkes \thanks{\tt hhupkes@math.leidenuniv.nl}}
\author[1]{Leonardo Morelli \thanks{\tt leonardo.morelli@gmail.com}}
\author[2]{Petr Stehl\'{\i}k\thanks{corresponding author, \tt pstehlik@kma.zcu.cz}}
\author[2]{Vladimir \v{S}v\'{\i}gler \thanks{\tt sviglerv@kma.zcu.cz}}
\affil[1]{\small Mathematisch Instituut, Universiteit Leiden, P.O. Box 9512, 2300 RA Leiden, The Netherlands}
\affil[2]{\small Department of Mathematics and NTIS, Faculty of Applied Sciences, University of West Bohemia,\authorcr Univerzitn\'\i~8, 306 14 Plze\v{n}\\ Czech Republic}
\begin{document}

\maketitle

\begin{abstract}
We discuss multichromatic front solutions
to the bistable Nagumo lattice differential equation.
Such fronts connect the stable spatially homogeneous equilibria with spatially heterogeneous $n$-periodic equilibria
and hence are not monotonic like the standard monochromatic fronts. In contrast to the bichromatic case,
our results show that these multichromatic
fronts can disappear and reappear as the diffusion coefficient
is increased. In addition, these multichromatic
waves can travel in parameter regimes
where the monochromatic fronts are also free to travel.
This leads to intricate collision processes where an
incoming multichromatic wave can reverse its direction
and turn into a monochromatic wave.


\smallskip
\noindent\textbf{Keywords:} reaction-diffusion equations; lattice differential equations;
travelling waves; wave collisions.

\smallskip
\noindent\textbf{MSC 2010:} 34A33, 37L60, 39A12

\end{abstract}

\section{Introduction}

In this paper we are interested in the Nagumo 
lattice differential equation (LDE)
\begin{equation}
\label{eq:int:nag:LDE}
\dot{u}_j(t)=d \big[ u_{j-1}(t) -2u_j(t) + u_{j+1}(t)  \big] 
+ u(1-u)(u-a) ,
\end{equation}
posed on the one-dimensional lattice $j \in \Wholes$,
for small values of the diffusion-coefficient $d > 0$.
This so-called anti-continuum regime features spatially periodic equilibria
for \sref{eq:int:nag:LDE} that can serve as buffer zones between regions
of space where the homogeneous stable equilibria $u \equiv 0$ and $u \equiv 1$ dominate the dynamics. Our goal here is to continue the program
initiated in \cite{HJHBICHROM} where two-periodic patterns and their connection to bichromatic waves were 
rigorously analyzed. 

In particular, we build a classification framework that also allows larger periods to be considered. 
We also construct
so-called multichromatic waves, which connect 
heterogeneous $n$-periodic rest-states
to each other or their homogeneous
counterparts $u \equiv 0$ and $u \equiv 1$.
%
We numerically analyze these multichromatic waves for $n \in \{3, 4 \}$ 
and show that they 
exhibit richer behaviour than 
the bichromatic versions. 
Indeed, these multichromatic waves can 
disappear and reappear as the diffusion $d$ is increased. 
In addition, open sets of parameters $(a,d)$ exist
where monochromatic and multichromatic waves
can travel simultaneously. This allows us to explore
various new types of wave collisions.


\paragraph{Reaction-diffusion systems}
The LDE \sref{eq:int:nag:LDE} can be seen as the spatial discretization
of the Nagumo PDE
\begin{equation}
    \label{eq:int:nag:pde}
    u_t = u_{xx} + u (1 - u)(u -a)
\end{equation}
onto a uniform grid with node-spacing $h = d^{-1/2}$. This scalar reaction-diffusion PDE can serve as a highly simplified model
to describe the interaction between two species or states 
(described by $u =0$
and $u = 1$) that compete for dominance in a spatial domain \cite{Aronson1975nonlinear}. It admits a comparison principle
and can be equipped with a variational structure \cite{Gallay2007variational},
but it also has a rich global attractor. As such, it has served
as a prototype system to investigate many of the key concepts in the field
of pattern formation, such as 
spreading speeds for compact disturbances \cite{WEINBERGER1978},
the existence and 
stability of travelling waves \cite{Fife1977,SATTINGER1977}
and other non-trivial entire solutions
\cite{morita2006entire,Yagisita2003backward}.

The semi-discrete version \sref{eq:int:nag:LDE} has served as a
playground to investigate the impact of the 
transition from a spatially continuous to a spatially
discrete domain. From a mathematical point of view,
interesting questions and complications arise due to the broken translational 
invariance \cite{VL30}. From the practical point of view, it is highly desirable
to be able to incorporate the natural spatial discreteness
present in many physical systems such as
myelinated nerve fibres \cite{RANVIER1878}, 
meta-materials \cite{CAHNNOV1994,CAHNVLECK1999,VAIN2009}
and crystals \cite{Celli1970motion,Dmitriev2000domain}.

\paragraph{Monochromatic waves}
Substitution of the travelling wave Ansatz $u(x,t) =\Phi(x - ct)$
into the PDE \sref{eq:int:nag:pde} yields the travelling
wave ODE
\begin{equation}
\label{eq:int:wave:ode}
-c \Phi' = \Phi'' +  \Phi(1 - \Phi)(\Phi - a).
\end{equation}
On the other hand, 
substitution of the discrete analog $u_j(t) = \Phi(j - ct)$
into the LDE \sref{eq:int:nag:LDE} yields
the monochromatic wave equation
\begin{equation}
\label{eq:int:wave:mfde}
    -c \Phi'(\xi) = 
     d \big[ \Phi(\xi - 1) - 2 \Phi(\xi) +  \Phi(\xi + 1) \big]
       + \Phi(\xi)\big(1 - \Phi(\xi)\big)\big( \Phi(\xi) - a\big) ,
\end{equation}
which is a functional differential equation of mixed type (MFDE).
We use the term monochromatic here to refer to the fact
that each spatial index $j$ follows the same waveprofile $\Phi$.
We are specially interested in waves that connect the two
stable equilibria $u= 0$ and $u = 1$. In particular, we impose
the boundary conditions
\begin{equation}
  \label{eq:int:bnd:cnds}
    \Phi(-\infty) = 0,
    \qquad
    \Phi(+\infty) = 1.
\end{equation}

The ODE \sref{eq:int:wave:ode} with \sref{eq:int:bnd:cnds}
can be analyzed by phase-plane analysis \cite{Fife1977}
(and even solved explicitly) to yield the existence
of solutions that increase monotonically
and have $\mathrm{sign}(c) = \mathrm{sign}(a - \frac{1}{2})$.
These waves have a large basin of attraction \cite{Fife1977}
and can be used as building blocks to construct and analyze more complicated
solutions \cite{WEINBERGER1978}.

More advanced techniques are required to analyze \sref{eq:int:wave:mfde},
but again it is possible to show that non-decreasing solutions
exist \cite{MPB}. However, it is now a very delicate question
to determine whether the uniquely determined wavespeed $c = c_{\mathrm{mc}}(a,d)$
satisfies $c_{\mathrm{mc}}(a,d) = 0$ or $c_{\mathrm{mc}}(a,d) \neq 0$. 
Indeed, the broken translational invariance causes
an energy-barrier that must be overcome before waves are able to travel.
As such, there is an open region in the $(a,d)$-plane for which
$c_{\mathrm{mc}}(a,d) = 0$ holds; see Fig. \ref{quad.fig:speed_thres_complete}.
This pinning phenomenon is generic 
\cite{BatesDiscConv,CMPVV, EVV,EVV2005AppMath,HOFFMPcrys,HJHVL2005,VL28,MPCP}
but not omnipresent
\cite{ELM2006,HJH2011} in discrete systems and has received considerable attention.

\paragraph{Bichromatic waves}
The discrete second derivative allows \sref{eq:int:nag:LDE}
to have a much larger class of equilibrium solutions
than the PDE \sref{eq:int:nag:pde}. For example,
two-periodic equilibria of the form
\begin{equation}
    u_j = \left\{ \begin{array}{lcl} v & & j \hbox{ is even}, \\
                   w & & j \hbox{ is odd}. 
                   \end{array} \right.
\end{equation}
can be found by solving the two-component system $G(v,w; a, d) = 0$
given by
\begin{equation}
G (v,w;a,d) = 
\begin{pmatrix}
2d (w -v) + v(1 - v)(v - a) \\
2d (v - w) + w(1 -w)(w - a) \\
\end{pmatrix}.
\end{equation}
The variable $w$ can be readily eliminated, leading to 
a ninth-order polynomial equation for the remaining component $v$.
For $0 < d \ll 1$ this polynomial has nine roots, leading to 
two stable and four unstable two-periodic equilibria for \sref{eq:int:nag:LDE}
besides the three spatially homogeneous equilibria $\{0, a , 1\}$.

In \cite{HJHBICHROM} we performed a full rigorous analysis of this system,
which shows that the number of these two-periodic equilibria
decreases as $d > 0$ is increased. In particular,  
there exist two functions $0 < d_s(a) < d_u(a)$ defined for
$a \in (0,1)$ so that the two stable patterns $(v_{\mathrm{bc}}, w_{\mathrm{bc}})$
and $(w_{\mathrm{bc}} , v_{\mathrm{bc}})$ collide 
with two unstable patterns and disappear as $d$ crosses $d_s(a)$.
The remaining two unstable patterns subsequently collide with $(a,a)$
as $d$ crosses $d_u(a)$, leaving only the three spatially
homogeneous equilibria. We emphasize that all these two-periodic
equilibria only exist in the region where monochromatic waves are pinned, i.e. $c_{\mathrm{mc}}(a,d ) =0$.

Based on general results in \cite{CHENGUOWU2008} we showed that
for $0 < d< d_s(a)$ the system
\sref{eq:int:nag:LDE} admits two
types of bichromatic waves
\begin{equation}
  u_j(t) = 
  \left\{ \begin{array}{lcl} \Phi_e(j -ct) & & j \hbox{ is even}, \\
                   \Phi_o(j-ct) & & j \hbox{ is odd}. 
                   \end{array} \right.
\end{equation}
The first class satisfies  the lower limits
\begin{equation}
    \lim_{\xi \to - \infty} \big(\Phi_e(\xi), \Phi_o(\xi) \big) = (0,0),
    \qquad
    \lim_{\xi \to + \infty} \big(\Phi_e(\xi), \Phi_o(\xi) \big)
    = (v_{\mathrm{bc}},w_{\mathrm{bc}}),
\end{equation}
and has wavespeed $c_{0 \rightarrow \mathrm{bc}} \ge 0$,
while the second class satisfies the
upper limits
\begin{equation}
    \lim_{\xi \to - \infty} \big(\Phi_e(\xi), \Phi_o(\xi) \big) = (v_{\mathrm{bc}},w_{\mathrm{bc}}),
    \qquad
    \lim_{\xi \to + \infty} \big(\Phi_e(\xi), \Phi_o(\xi) \big) 
    = (1,1),
\end{equation}
and has $c_{\mathrm{bc}\to 1} \le 0$. In \cite{HJHBICHROM} we showed
that there exist two thresholds 
\begin{equation}
\label{eq:int:threshold:ex}
    0 < d_{0 \to \mathrm{bc}}(a) \le d_s(a),
    \qquad
    0 < d_{\mathrm{bc} \to 1}(a) \le d_s(a),
\end{equation}
so that in fact $c_{0 \to \mathrm{bc}} > 0$
respectively $c_{\mathrm{bc} \to 1} < 0$ holds as
$d$ is increased above these thresholds.
In addition, for all $a \in (0,1)$ one or both of the
inequalities in \sref{eq:int:threshold:ex} is strict,
indicating the presence
of one or more \textit{travelling} bichromatic waves
for $d$ sufficiently close to $d_s(a)$.

Numerical results indicate that these two types of bichromatic waves
can be glued together via an intermediate buffer zone
that displays the two-periodic pattern $(v_{\mathrm{bc}}, w_{\mathrm{bc}})$. This buffer zone
is consumed as the waves move towards each other and eventually collide
to form a trapped monochromatic wave;
see Figure \ref{tric.fig:LDE_C_D} for the trichromatic analogue.

Bichromatic waves have also been found in several other spatially discrete settings. The results in \cite{BRUC2011,Vainchtein2015propagation}
apply to an anti-diffusion version of \sref{eq:int:nag:LDE} where $d< 0$.
This can reformulated as a two-component problem with positive alternating
diffusion coefficients, 
allowing the general results in \cite{CHENGUOWU2008} to be applied.
Several versions of the two-periodic FPU problem are considered in \cite{Faver2017nanopteron,Faver2018exact, Hoffman2017nanopteron}.
Using a different palette of techniques, the authors obtain so-called 
nanopteron solutions, which can have small high-frequency oscillations
in their tails. Finally, the two-periodic FitzHugh-Nagumo problem
was considered in \cite{schouten2018nonlinear} using a modified
spectral-convergence argument.

\paragraph{Multichromatic waves}
The main purpose of the present paper is to illustrate
the novel behaviour that arises for \sref{eq:int:nag:LDE} 
when considering wave connections to/from
stable $n$-periodic patterns with $n \ge 3$.
Our two main conclusions are that the monotonicity
properties described above are no longer valid
and that travelling multichromatic waves
can co-exist with travelling monochromatic waves.
In particular, travelling multichromatic waves
can appear, disappear and reappear as $d > 0$ is increased
and can collide with other multichromatic waves to
form travelling monochromatic waves.

Since the degree of the polynomial that governs
the $n$-periodic equilibria is given by $3^n$,
it is essential to develop an appropriate classification
system to keep track of all the roots and their
ordering properties. We develop such a system in this paper,
using words from the set $\{\mathfrak{0} , \mathfrak{a}, \mathfrak{1}\}^n$
to track roots that bifurcate off the corresponding sequence
of zeroes of the cubic at $d = 0$. The lack of monotonicity with respect
to $d$ leads to complications and forces us to allow both parameters
$(a,d)$ to vary when tracking roots of a related algebraic problem, unlike in \cite{HJHBICHROM}.
Although the general theory in \cite{CHENGUOWU2008} also applies
to our setting, it is still a challenge to check the conditions
in a systematic fashion.

\paragraph{Wave collisions}
Understanding the interaction between waves
is an important topic that is attracting considerable
attention, primarily in the spatially continuous setting at present.
The so-called weak interaction regime where the waves are far apart is relatively well-understood; see e.g. 
the exit manifold developed by Wright and Hoffman \cite{HW11}
for the discrete setting
and the numerous studies on
renormalization techniques for the continuous setting
\cite{bellsky2013adiabatic,doelman2007nonlinear,van2010front}.

However, at present there is no general theory to understand strong 
interactions, where the core of the waves approach each other
and deform significantly.
Early numerical results by Nishiura and coworkers
\cite{nishiura2003scattering} for the Gray-Scott
and a three-component FitzHugh-Nagumo system
suggest that the fate of colliding waves (annihilation, 
combination or scattering) is related
to the properties of a special class of unstable solutions
called separators. Even the internal dynamics
of a single pulse under the influence of essential spectrum
(a proxy for the advance of a second wave)
can be highly complicated, see e.g.
\cite{chirilus2015butterfly} for the (partial)
unfolding of a butterfly catastrophe.

Naturally, more information can be obtained in the presence of a comparison principle. Indeed, for the PDE \sref{eq:int:nag:pde}
one can show that monostable waves can merge
to form a bistable wave \cite{morita2006entire}
and that counterpropagating waves can annihilate
\cite{morita2009entire,Yagisita2003backward}. If one modifies
the nonlinearity to allow more zero-crossings,
one can stack waves that connect a chain of equilibria 
to form so-called propagating terraces \cite{polacik2016propagating}.

We emphasize that the collisions described in this paper
are far richer than those described above for \sref{eq:int:nag:pde}.
This is a direct consequence of the delicate structure of the
set of equilibria for \sref{eq:int:nag:LDE}. By exploiting the comparison
principle, our hope is that this system can serve as a playground
for generating and understanding complicated collision processes.

\paragraph{Organization}
In {\S}\ref{sec:mcr} we discuss the algebraic problem
that $n$-periodic equilibria must satisfy, develop a naming scheme
for its roots and formulate a result concerning the existence
of travelling waves. In {\S}\ref{sec:tri}
we discuss trichromatic waves and focus on the fact that for certain values of $a$ three-periodic stable equilibria can disappear and reappear as the diffusion parameter $d$ is increased. We move
on to quadrichromatic waves in {\S}\ref{sec:quad},
highlighting the fact 
that quadrichromatic and monochromatic waves
can travel simultaneously in certain parameter regions. This  allows us to study various 
types of wave collisions.
Finally, in {\S}\ref{sec:pmr} we prove our main result Theorem \ref{thm:mcr:waves}, which establishes the existence of multichromatic waves.

\paragraph{Acknowledgments}
HJH acknowledges support from the Netherlands Organization for Scientific Research (NWO)
(grant 639.032.612). LM acknowledges support from the Netherlands Organization for
Scientific Research (NWO) (grant 613.001.304). PS acknowledges the support of the project LO1506 of the Czech Ministry of Education, Youth and Sports under the program NPU I. The authors are grateful to Anton\'\i n Slav\'\i k for his comments.

\section{Multichromatic Root Naming and Ordering}
\label{sec:mcr}

The main focus of this paper is the Nagumo lattice differential equation (LDE) \begin{equation}
\label{eq:mcr:nagumo:lde}
\dot{u}_j(t)=d \big[ u_{j-1}(t) -2u_j(t) + u_{j+1}(t)  \big] + g\big(u_j(t); a\big), \qquad j \in \Wholes,
\end{equation}
in which the parameters $(a,d)$ are taken from the half-strip
\begin{equation}
  \label{eq:mcr:def:h}
    \mathcal{H} = [0,1] \times [0, \infty)
\end{equation}
and the nonlinearity is given by the cubic
\begin{equation}
g\big(u; a\big) = u(1-u)(u-a) .
\end{equation}
Our results focus on
$n$-periodic stationary solutions
to \sref{eq:mcr:nagumo:lde}
and the waves that connect them.

In {\S}\ref{sec:mcr:naming} we develop a naming
system that allows us to partially classify these
stationary solutions in an intuitive fashion.
We proceed in {\S}\ref{sec:mcr:waves} by formulating
a result for the existence of waves that uses
our naming system to decide which equilibria can be connected.
Finally, equivalence classes for these waves 
are introduced in {\S}\ref{sec:mcr:equiv} by exploiting the symmetries present in \sref{eq:mcr:nagumo:lde}.

\subsection{Equilibrium types}
\label{sec:mcr:naming}

We will write $n$-periodic equilibria for the LDE
\sref{eq:mcr:nagumo:lde} in the form
\begin{equation}
u_i = \gu_{\mod(i,n)}
\end{equation}
for some vector $\gu \in \Real^n$, where we let
the modulo operator take values in 
\begin{equation}
    \mod(i,n) \in \{1, \ldots , n \} .
\end{equation}
We remark that $\gu$ can be interpreted as a solution of the 
Nagumo equation posed on a cyclic graph of length $n$; see \cite{Stehlik2017}.
Taking $n \ge 3$
and introducing the nonlinear mapping
\begin{equation}
\label{eq:mcr:def:G}
G(\gu;a,d) := \begin{pmatrix}
d (\gu_n-2\gu_1+\gu_2) + g\big(\gu_1; a\big) \\
d (\gu_1-2\gu_2+\gu_3) + g\big(\gu_2; a\big) \\
\vdots \\
d (\gu_{n-1}-2\gu_n+\gu_1) + g\big(\gu_n; a\big)
\end{pmatrix}
\in \Real^n,
\end{equation}
we see that any such equilibrium must satisfy $G(\gu;a,d) = 0$.


For any $a \in (0,1)$ and $\gu \in \{0, a, 1\}^n$, it is easy to see that
$G(\gu; a, 0) = 0$ and to confirm that the diagonal matrix
\begin{equation}
\label{eq:mcr:diag:matrix:d1g}
    D_1 G (\gu  ; a, 0) = \mathrm{diag}\Big( g'( \gu_1; a \big), \ldots , g'\big(\gu_n; a \big) \Big)
\end{equation}
has non-zero entries.
In particular, the implicit function theorem implies that
each of these $3^n$ roots is part of a smooth one-parameter family of roots 
that exists whenever $\abs{d}$ is small. In fact, one can track
the location of each of these roots as $d$ is increased, 
up until the point where the root in question disappears by colliding with another root.
This procedure forms the heart of the naming scheme that we develop here,
which will allow us to refer to different types of roots in an efficient manner.

In particular, we set out to label solutions of the equation $G( \cdot \,; a, d) = 0$
with words $\gw$ taken from the set $\{\mathfrak{0} , \mathfrak{a}, \mathfrak{1} \}^n$. 
We emphasize that we are using the fixed symbol $\mathfrak{a}$ as placeholder for 
the parameter $a \in (0,1)$,
which is allowed to vary. 
Indeed, for any 
$\gw \in \{\mathfrak{0} , \mathfrak{a}, \mathfrak{1} \}^n$ we introduce the 
vector $\gw_{\mid a} \in \Real^n$ by writing
\begin{equation}
\big(\gw_{\mid a}\big)_i = \left\{ \begin{array}{lcl} 
                             0 & & \hbox{if } \gw_i = \mathfrak{0}, \\
                             a & & \hbox{if } \gw_i = \mathfrak{a},  \\
                             1 & & \hbox{if } \gw_i = \mathfrak{1} .
                         \end{array}
                        \right.
\end{equation}

\begin{definition}\label{d:mcr:eq:type}
Consider a word $\gw \in \{\mathfrak{0} , \mathfrak{a}, \mathfrak{1} \}^n$
together with a triplet
\begin{equation}
(\gu, a,d) \in [0,1]^n \times (0, 1) \times [0, \infty) .
\end{equation}
Then we say that $\gu$ is an equilibrium
of type $\gw$ if there exists a $C^1$-smooth curve
\begin{equation}
\label{eq:gamma:curve}
[0,1] \ni t \mapsto \big( \gv(t),  \alpha(t), \delta(t)  \big) \in [0,1]^n \times (0,1) \times [0, \infty)
\end{equation}
so that we have
\begin{equation}
\label{eq:mc:path:reqs:basic}
\begin{array}{lcl}
  (\gv, \alpha,\delta)(0) & = & (\gw_{\mid a}, a , 0),
\\[0.2cm]
  (\gv, \alpha,\delta)(1) & = & (\gu, a , d),
\end{array}
\end{equation}
together with
\begin{equation}
\label{eq:mc:path:reqs}
    G\big( \gv(t) ; \alpha(t), \delta(t) \big) = 0,
    \qquad
    \det D_1 G\big( \gv(t); \alpha(t), \delta(t) \big) \neq 0
\end{equation}
for all $0 \le t \le 1$.
\end{definition}

We note that substituting $t =1$ in \sref{eq:mc:path:reqs} shows that indeed $G(\gu ; a,d ) = 0$,
justifying the terminology of an equilibrium. In addition, the second requirement in \sref{eq:mc:path:reqs}
allows us to apply the implicit function theorem to conclude that 
$G( \cdot \, ; \tilde{a}, \tilde{d}) = 0$
also has equilibria of type $\gw$ for all pairs $(\tilde{a},\tilde{d})$ sufficiently close to $(a,d)$.
In particular, 
these observations allow us to introduce the pathwise connected set
\begin{equation}
\Omega_{\gw} = \{ ( a, d) \in \mathcal{H} : \hbox{the system } G
  (\cdot \, ;a, d) = 0 \hbox{ admits an equilibrium of type } \gw \},
\end{equation}
which is open in the half-strip $\mathcal{H} =  [0,1] \times [0, \infty)$.

We now impose the following conditions on the structure of these sets $\Omega_{\gw}$.
The second of these basically states
that the interior of the curve \sref{eq:gamma:curve} can be perturbed freely within $\Omega_{\gw}$ without changing
the value of the equilibrium $\gu$. 

\begin{itemize}
  \item[${\rm (H\Omega 1)}$]{
    For any two words $\gw_A,\gw_B \in \{\mathfrak{0} , \mathfrak{a}, \mathfrak{1} \}^n$ the intersection
    $\Omega_{\gw_A} \cap \Omega_{\gw_B}$ is connected.
  }
  \item[${\rm (H\Omega 2)}$]{
    Consider any word $\gw \in \{\mathfrak{0} , \mathfrak{a}, \mathfrak{1} \}^n$ and any $(a,d) \in \Omega_\gw$,
    together with a pair of curves 
    \begin{equation}
        [0,1] \ni t \mapsto (\alpha_A, \delta_A)(t) \in \Omega_{\gw},
        \qquad
        [0,1 ] \ni t \mapsto (\alpha_B, \delta_B)(t) \in \Omega_{\gw}
    \end{equation}
    that have
    \begin{equation}
        (\alpha_A, \delta_A)(0) = (\alpha_B , \delta_B)(0) = (a, 0),
        \qquad
        (\alpha_A, \delta_A)(1) = (\alpha_B , \delta_B)(1) = (a, d).
    \end{equation}
    Then there exist unique functions
    \begin{equation}
        [0,1] \ni t \mapsto (\gv_A, \gv_B)(t) \in \Real^n \times \Real^n
    \end{equation}
    so that the triplets $(\gv_A , \alpha_A , \delta_A)$
    and $(\gv_B, \alpha_B, \delta_B)$
    both satisfy \sref{eq:mc:path:reqs} for all $0 \le t \le 1$,
    together with the identities
    \begin{equation}
          \gv_A(0) =  \gv_B(0) = \gw_{\mid a},
          \qquad
          \gv_A(1) = \gv_B(1).
    \end{equation}
  }
\end{itemize}

At first glance, the condition ${\rm (H\Omega 2)}$ appears to be rather cumbersome to verify in practice.
To make this more feasible, it is useful to introduce the set 
\begin{equation}
\label{eq:mcr:def:Gamma}
\Gamma = \{ ( a,d) \in \mathcal{H}: \hbox{ there exists } 
  u \in \Real^n \hbox{ for which } G( u; a , d) = 0
  \hbox{ and } \det D_1 G(u; a, d) = 0
\},
\end{equation}
which by continuity is a closed subset of 
$\mathcal{H} = [0,1] \times [0, \infty)$.
Let us assume that $\mathcal{H} \setminus \Gamma$ consists
of a finite number $N_{\Gamma}$ of components
$\{V_i\}_{i=1}^{N_{\Gamma}}$ that are open in $\mathcal{H}$ and \emph{simply connected}. 
On account of a global implicit function
theorem \cite[Thm. 3]{blot1991global},
there exist non-negative integers $\{m_i\}_{i=1}^{N_{\Gamma}}$
together with smooth functions
\begin{equation}
\gu_{i,j}: V_i \to \Real^n, \qquad \qquad 1 \le i \le N_{\Gamma}, \qquad 1 \le j \le m_i
\end{equation}
so that $G(\gu_{i,j}(a,d);a,d) = 0$ for each $(a,d) \in V_i$.
In addition, $\gu_{i,j_1}(a,d) \neq \gu_{i,j_2}(a,d)$ whenever $j_1 \neq j_2$
and every solution $G(\cdot;a,d) = 0$ with $(a,d) \in V_i$ can be written in this way. 

In this setting, ${\rm (H\Omega 2)}$ can be verified by checking
which of these functions can be connected continuously
through the boundaries of adjacent components. Indeed, ${\rm (H\Omega 2)}$
is satisfied if there is no sequence of connections that starts 
in $u_{i,j_1}$ and ends in $u_{i,j_2}$ for some $1 \le i \le N_\Gamma$ and some pair $1 \le j_1 \neq j_2 \le m_i$.

In any case, writing $V_*$ for the component of $\mathcal{H} \setminus \Gamma$ that contains the horizontal segment $(0,1) \times \{0\}$, 
${\rm (H\Omega 2)}$ can always be achieved if one replaces $\Omega_{\gw}$ by subsets of the form
\begin{equation}
\Omega^*_{\gw} = \{ ( a, d) \in V_* : \hbox{the system } G(\cdot;a, d) = 0 \hbox{ admits an equilibrium of type } \gw \}.
\end{equation}
However, we emphasize that ${\rm (H\Omega 2)}$ appears to be valid without this artificial restriction
for the regions that we have numerically computed in this paper.

\begin{cor}
\label{cor:mcr:unq:props}
Fix an integer $n \ge 2$ and suppose that ${\rm (H\Omega 1)}$ and ${\rm (H\Omega 2)}$ both hold. Then for any word $\gw \in \{\mathfrak{0} , \mathfrak{a}, \mathfrak{1} \}^n$
there is a smooth function $\gu_{\gw}: \Omega_{\gw} \to \Real^n$ so that
$\gu_\gw( a, d)$ is the unique equilibrium of type $\gw$ for the system $G(\cdot \, ; a,d) = 0$
for all $(a,d) \in \Omega_{\gw}$. In addition, whenever $(a,d) \in \Omega_{\gw_A} \cap \Omega_{\gw_B}$
for two distinct words $\gw_A, \gw_B \in \{\mathfrak{0} , \mathfrak{a}, \mathfrak{1} \}^n$ we have 
\begin{equation}
\label{eq:mcr:cor:u:w:uneq}
  \gu_{\gw_A}(a,d) \neq \gu_{\gw_B}(a,d).
\end{equation}
\end{cor}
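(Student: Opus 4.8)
The plan is to prove two essentially independent assertions. First, for each fixed word $\gw$ the equilibrium of type $\gw$ is \emph{unique} at every $(a,d)\in\Omega_{\gw}$ and depends smoothly on $(a,d)$; this is where ${\rm (H\Omega 2)}$ does the work. Second, distinct words produce distinct equilibria throughout the overlap $\Omega_{\gw_A}\cap\Omega_{\gw_B}$; this is where ${\rm (H\Omega 1)}$ enters. Throughout, the underlying engine is the nondegeneracy $\det D_1 G\neq 0$ built into Definition \ref{d:mcr:eq:type}: at a nonsingular root the implicit function theorem continues that root uniquely and smoothly along any parameter path, and this single-valued-continuation principle is invoked repeatedly.

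For the uniqueness half I would fix $(a,d)\in\Omega_{\gw}$ and take two equilibria $\gu^A,\gu^B$ of type $\gw$, certified by curves $(\gv_A,\alpha_A,\delta_A)$ and $(\gv_B,\alpha_B,\delta_B)$ satisfying \sref{eq:mc:path:reqs}. Since these curves are valued in $[0,1]^n\times(0,1)\times[0,\infty)$, truncating either one at a time $t$ certifies that $(\alpha(t),\delta(t))\in\Omega_{\gw}$; hence the parameter projections $(\alpha_A,\delta_A)$ and $(\alpha_B,\delta_B)$ are curves inside $\Omega_{\gw}$ joining $(a,0)$ to $(a,d)$, both issuing from the seed $\gw_{\mid a}$. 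This is precisely the configuration of ${\rm (H\Omega 2)}$. Because the lift of a nonsingular parameter path from a prescribed seed is unique, $\gv_A$ and $\gv_B$ are exactly the two lifts supplied by ${\rm (H\Omega 2)}$, whose endpoint clause then forces $\gu^A=\gv_A(1)=\gv_B(1)=\gu^B$. Smoothness of the resulting map $\gu_{\gw}$ follows locally: at any $(a_0,d_0)\in\Omega_{\gw}$ the value $\gu_{\gw}(a_0,d_0)$ is a nonsingular root, so the implicit function theorem produces a smooth root nearby, which is of type $\gw$ by concatenating its short continuation onto the certifying curve and therefore coincides with $\gu_{\gw}$ by the uniqueness just established.

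For the distinctness half I would work on $W:=\Omega_{\gw_A}\cap\Omega_{\gw_B}$, connected by ${\rm (H\Omega 1)}$, and study the coincidence set $Z=\{(a,d)\in W : \gu_{\gw_A}(a,d)=\gu_{\gw_B}(a,d)\}$. It is closed in $W$ by continuity. It is also open: if the two maps agree at $(a_0,d_0)$ with common nonsingular value $\gu_*$, then both coincide near $(a_0,d_0)$ with the unique implicit-function root through $\gu_*$, hence with each other. Finally $Z\neq W$, because the open segment $(0,1)\times\{0\}$ lies in every $\Omega_{\gw}$ (the constant curve certifies the seed), and there $\gu_{\gw_A}(a,0)=(\gw_A)_{\mid a}\neq(\gw_B)_{\mid a}=\gu_{\gw_B}(a,0)$ since the words differ in some coordinate and $0,a,1$ are distinct for $a\in(0,1)$. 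Connectedness of $W$ then forces $Z=\emptyset$, which is \sref{eq:mcr:cor:u:w:uneq}.

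The main obstacle is bookkeeping rather than a hard estimate. One must check carefully that the parameter projections of the certifying curves genuinely stay inside $\Omega_{\gw}$ so that ${\rm (H\Omega 2)}$ is applicable, and that each local implicit-function continuation really matches the globally defined type-$\gw$ function. Both reduce to the same fact --- a nonsingular root admits a unique continuation --- but applying it cleanly requires tracking the seed and the endpoint through every truncation, concatenation and reversal of curves, and keeping the codomain constraint $\alpha(t)\in(0,1)$ in view. Once this is arranged, ${\rm (H\Omega 2)}$ delivers single-valuedness and smoothness, while ${\rm (H\Omega 1)}$ together with the clopen argument delivers separation, with no essential change across $n\ge 2$.
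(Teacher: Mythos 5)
Your proof is correct, and it splits into two halves that relate differently to the paper's own argument. The uniqueness/smoothness half is essentially the paper's proof: the paper compresses it into the single sentence that the first statement ``is a consequence of ${\rm (H\Omega 2)}$ and the implicit function theorem'', and your lifting, truncation and seed-matching bookkeeping is precisely what that sentence hides (including the small repair of re-anchoring a truncated certificate at $(\gw_{\mid \alpha(t_0)},\alpha(t_0),0)$ via a $d=0$ segment, which is needed because Definition \ref{d:mcr:eq:type} ties the seed to the endpoint's $a$-value). For the distinctness statement \sref{eq:mcr:cor:u:w:uneq}, however, you take a genuinely different route. The paper argues by contradiction: assuming $\gu_{\gw_A}(a,d)=\gu_{\gw_B}(a,d)=\gu$, it uses ${\rm (H\Omega 1)}$ to pick one path from $(a,0)$ to $(a,d)$ inside $\Omega_{\gw_A}\cap\Omega_{\gw_B}$, then applies ${\rm (H\Omega 2)}$ to continue $\gu$ backward along that path to \emph{both} seeds $(\gw_A)_{\mid a}$ and $(\gw_B)_{\mid a}$, contradicting uniqueness of continuation. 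You instead run an open-closed argument on the coincidence set $Z\subset W=\Omega_{\gw_A}\cap\Omega_{\gw_B}$: closed by continuity, open by local implicit-function uniqueness at a common nonsingular root, and proper because the two maps visibly differ on $(0,1)\times\{0\}\subset W$, so connectedness of $W$ forces $Z=\emptyset$. Your variant uses ${\rm (H\Omega 1)}$ only through bare connectedness (the paper implicitly upgrades to path-connectedness, which is legitimate since $W$ is open in $\mathcal{H}$) and does not invoke ${\rm (H\Omega 2)}$ at all in this half, so it is slightly more economical in hypotheses; the paper's contradiction is shorter on the page and makes explicit that coincidence would force one root to remember two distinct seeds. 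Both arguments are sound and rest on the same engine, namely that a nonsingular root continues uniquely.
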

\begin{proof}
The first statement is a consequence of ${\rm (H\Omega 2)}$ and the implicit function theorem. For the second statement,
let us argue by contradiction and assume that both vectors in \sref{eq:mcr:cor:u:w:uneq} are equal to $\gu \in \Real^n$.
Condition ${\rm (H\Omega 1)}$ allows us to pick a path from $(a,0)$ to $(a,d)$ that lies entirely within $\Omega_{\gw_A} \cap \Omega_{\gw_B}$.
Applying ${\rm (H\Omega 2)}$ shows that $\gu$ can be continued as an equilibrium along this path back to both $(\gw_A)_{\mid a}$
and $(\gw_B)_{\mid a}$. However, the implicit function theorem implies that this continuation should be unique.
\end{proof}

We emphasize that our classification scheme only track roots until the first time the associated Jacobian becomes singular
and the implicit function theorem can no longer be applied.
We will encounter several different types of behaviour at such points. It is possible for two roots
to collide and become complex and sometimes even recombine at `later' parameter values. We will also encounter multi-root collisions
where one or more roots survive the collision process. 
In such cases, we often use an ad-hoc naming system, where we label the emerging
branch by reusing or combining  the types of the original colliding branches.

\subsection{Waves}
\label{sec:mcr:waves}

In this section we focus on 
wave solutions to \sref{eq:mcr:nagumo:lde}  that connect the $n$-periodic stationary solutions 
investigated in the previous section. These so-called multichromatic
waves can be written as
\begin{equation} \label{eq:tw:ansatz}
u_j(t) = \Phi_{\mod(j,n)}(j - ct)
\end{equation}
for some wavespeed $c \in \Real$ and $\Real^n$-valued waveprofile
\begin{equation}
\Phi = (\Phi_1, \Phi_2,\ldots,\Phi_{n}): \Real \to \Real^n
\end{equation}
that satisfies the boundary conditions
\begin{equation}
  \label{eq:mr:bnd}
    \lim_{\xi \to - \infty}
      \Phi(\xi) = \gu_{\gw_-},
      \qquad
    \lim_{\xi \to + \infty}
      \Phi(\xi) = \gu_{\gw_+}
\end{equation}
for some pair of words 
$\gw_\pm \in \{\mathfrak{0}, 
\mathfrak{1} \}^n$; see Corollary \ref{cor:mcr:unq:props}.

Substituting this Ansatz into \sref{eq:mcr:nagumo:lde} 
yields the traveling wave functional differential equation
\begin{equation}
\label{eq:mcr:wave:mfde}
\begin{array}{lcl}
- c  \Phi_1'(\xi)
 &=& d \big[ \Phi_{n}(\xi - 1) - 2 \Phi_1(\xi) + \Phi_2(\xi + 1) \big]
    + g\big(\Phi_1(\xi); a \big) ,
\\[0.2cm]
- c  \Phi_2'(\xi)
 &=& d \big[  \Phi_1(\xi - 1) - 2 \Phi_2(\xi) + \Phi_3(\xi + 1) \big]
    + g\big( \Phi_2(\xi); a \big) , \\
&\vdots& \\
- c  \Phi_{n}'(\xi)
 &=& d \big[  \Phi_{n-1}(\xi - 1) - 2 \Phi_{n}(\xi) + \Phi_1(\xi + 1) \big]
    + g\big( \Phi_{n}(\xi); a \big),
\end{array}
\end{equation}
which has positive coefficients on all shifted terms and also has diagonal
nonlinearities. This system hence
fits into the framework developed in \cite{CHENGUOWU2008},
provided that the assumptions pertaining to the boundary
conditions \sref{eq:mr:bnd} can also be validated.

\begin{figure}
\begin{minipage}{\textwidth}
\centering
\begin{minipage}{0.45\textwidth}
\centering
\includegraphics[width=\textwidth]{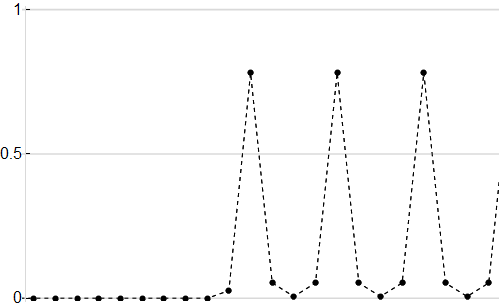} \\
(a) $\gw_-=\mathfrak{0000}, \gw_+=\mathfrak{0001}$.
\end{minipage}\ 
\begin{minipage}{0.45\textwidth}
\centering
\includegraphics[width=\textwidth]{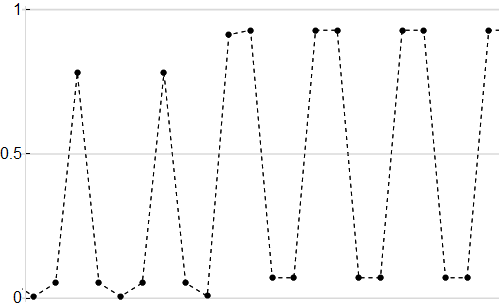} \\
(b) $\gw_-=\mathfrak{0001}, \gw_+ =\mathfrak{0011}$.
\end{minipage} 
\end{minipage}
\caption{Two examples of $4$-chromatic waves as described in Theorem \ref{thm:mcr:waves}.
}\label{f:connections}
\end{figure}

This is
in fact the key question that we address in 
our main result below. 
This result requires the following assumption,
which states that our root tracking scheme captures all 
(marginally) stable\footnote{
  We use the eigenvalues of the matrix $D_1G(\gu; a, d)$
  to characterize the stability of $\gu$. Using the comparison principle
  this can be easily transferred to the
  full LDE \sref{eq:int:nag:LDE}.
}
$n$-periodic equilibria of the LDE.
In particular, the types of these equilibria correspond with words from the \textit{stable}
subset $\{\mathfrak{0},\mathfrak{1}\}^n$.

\begin{itemize}
    \item[(HS)]{
      Recall the definitions \sref{eq:mcr:def:h}
      and \sref{eq:mcr:def:Gamma} and
      suppose that $G(\gu; a, d) = 0$ for some $\gu \in \Real^n$ and $(a,d) \in \mathcal{H} $. 
      Suppose furthermore that
      all eigenvalues $\lambda$ of $D_1 G( \gu;a,d)$ satisfy
      $\lambda \le 0$.
      Then we have\footnote{
        If $(a,d) \in \partial \Omega_{\gw}$
        this identity should be interpreted as a limit.
      } $\gu = \gu_{\gw}(a, d)$ for some 
      $\gw \in \{\mathfrak{0}, \mathfrak{1}\}^n$
      and $(a,d) \in \overline{\Omega}_{\gw}$.
    }
\end{itemize}


\begin{theorem}[{see {\S}\ref{sec:pmr}}]
\label{thm:mcr:waves}
Fix an integer $n \ge 2$ and assume that ${\rm (H\Omega 1)}$, ${\rm (H\Omega 2)}$ and ${\rm (HS)}$ all hold.
Consider two distinct words $\gw_-, \gw_+\in \{\mathfrak{0},\mathfrak{1} \}^n$ with $\gw_- \le \gw_+$
and pick $(a,d) \in \Omega_{\gw_-} \cap \Omega_{\gw_+}$ with $d > 0$.
Suppose furthermore that  one of the following conditions holds.
\begin{itemize}
    \item[(a)]{
      The words $\gw_-$ and $\gw_+$ differ at precisely one location.
    }
    \item[(b)]{
      For each $\gw \in \{\mathfrak{0}, \mathfrak{1}\}^n \setminus \{ \gw_-, \gw_+ \}$ 
      that satisfies $\gw_- \le \gw \le \gw_+$ we have $(a,d) \notin \overline{\Omega}_{\gw}$.
    }
\end{itemize}
Then there exists a unique $c \in \Real$
for which the travelling system \sref{eq:mcr:wave:mfde}
admits a solution $\Phi: \Real \to \Real^n$ 
that satisfies the boundary conditions
\begin{equation}
\label{eq:mcr:wave:bnd:cnds}
    \lim_{\xi \to - \infty} \Phi(\xi) = \gu_{\gw_-}(a,d),
    \qquad
    \lim_{\xi \to + \infty} \Phi( \xi  ) = \gu_{\gw_+}(a,d).
\end{equation}
If $c \neq 0$, then $\Phi$ is unique up to translation and 
each component is strictly increasing.
\end{theorem}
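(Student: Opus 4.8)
The plan is to recognise that the travelling-wave system \sref{eq:mcr:wave:mfde} is a \emph{cooperative} (quasi-monotone) lattice functional differential equation of mixed type: since $d>0$ every shifted term carries a positive coefficient and the reaction terms are diagonal, so a comparison principle is available. I would therefore reduce the whole statement to the general bistable theory of \cite{CHENGUOWU2008}, exactly as advertised before the theorem. That theory delivers a unique admissible speed $c$ together with a monotone profile once one supplies two ingredients: (i) the limiting states $\gu_{\gw_-}(a,d)$ and $\gu_{\gw_+}(a,d)$ are \emph{ordered, asymptotically stable} equilibria, and (ii) there is \emph{no further stable equilibrium strictly between them}. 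The bulk of the work is verifying (i)--(ii); the remaining conclusions (uniqueness of $c$, uniqueness of $\Phi$ up to translation, and strict monotonicity of each component when $c\neq 0$) then follow from the sliding and strong-comparison machinery built into that framework, which applies precisely in the genuinely travelling case $c\neq 0$.

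For ingredient (i) I would first note that $D_1G(\gu;a,d)$ is symmetric---the coupling block is $d$ times the symmetric cyclic Laplacian and the reaction block is diagonal---so all its eigenvalues are real. Along the defining curve \sref{eq:gamma:curve} for a word $\gw\in\{\mathfrak{0},\mathfrak{1}\}^n$ we have $\det D_1G\neq 0$ throughout, and at $t=0$ the Jacobian equals $\mathrm{diag}(g'((\gw_{\mid a})_i))$ with every entry negative, since $g'(0)=-a<0$ and $g'(1)=-(1-a)<0$. Because no real eigenvalue can cross $0$ without the determinant vanishing, all eigenvalues stay negative; hence $\gu_{\gw_\pm}(a,d)$ are asymptotically stable. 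For the ordering I would establish a comparison lemma asserting that the assignment $\gw\mapsto\gu_{\gw}(a,d)$, restricted to stable words, is order preserving, so that $\gw_-\le\gw_+$ forces $\gu_{\gw_-}(a,d)\le\gu_{\gw_+}(a,d)$. This is transparent at $d=0$, where $\gu_{\gw}=\gw_{\mid a}$, and is propagated along a path in $\Omega_{\gw_-}\cap\Omega_{\gw_+}$ (which exists and is connected by ${\rm (H\Omega 1)}$) using the comparison principle for the equilibrium problem together with the uniqueness from Corollary \ref{cor:mcr:unq:props}.

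The heart of the proof is ingredient (ii). Suppose, for contradiction, that some equilibrium $\gu_*\neq\gu_{\gw_\pm}(a,d)$ satisfies $\gu_{\gw_-}(a,d)\le\gu_*\le\gu_{\gw_+}(a,d)$ and that all eigenvalues of $D_1G(\gu_*;a,d)$ are at most $0$. Assumption ${\rm (HS)}$ then produces a word $\gw\in\{\mathfrak{0},\mathfrak{1}\}^n$ with $\gu_*=\gu_{\gw}(a,d)$ and $(a,d)\in\overline{\Omega}_{\gw}$. Invoking the \emph{order-reflecting} direction of the same comparison lemma, the vector sandwich $\gu_{\gw_-}\le\gu_{\gw}\le\gu_{\gw_+}$ upgrades to the word sandwich $\gw_-\le\gw\le\gw_+$, while the injectivity in Corollary \ref{cor:mcr:unq:props} forces $\gw\notin\{\gw_-,\gw_+\}$. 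Under hypothesis (b) this immediately contradicts $(a,d)\in\overline{\Omega}_{\gw}$. Under hypothesis (a) there is simply no admissible intermediate word: $\gw_-$ and $\gw_+$ agree in every coordinate except one, where they read $\mathfrak{0}$ and $\mathfrak{1}$, and no symbol of $\{\mathfrak{0},\mathfrak{1}\}$ lies strictly between them, so $\gw\in\{\gw_-,\gw_+\}$, again a contradiction. The borderline cases $(a,d)\in\partial\Omega_{\gw}$ and eigenvalue $=0$ (marginal stability) would be handled by passing to limits, as flagged in the footnote to ${\rm (HS)}$.

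I expect the main obstacle to be the comparison lemma relating the word order on $\{\mathfrak{0},\mathfrak{1}\}^n$ to the pointwise vector order on the equilibria $\gu_{\gw}(a,d)$, in \emph{both} directions and uniformly up to the boundaries $\overline{\Omega}_{\gw}$. Order preservation is the natural consequence of monotone continuation from $d=0$, but the order-reflecting statement---that distinct stable equilibria never cross and that a vector sandwich forces the corresponding word sandwich---is more delicate, and it is exactly the mechanism that couples ${\rm (HS)}$ to conditions (a)/(b). Once (i) and (ii) are secured, the existence of a unique speed, the monotone profile, and (for $c\neq 0$) strict monotonicity and translation-uniqueness follow as a direct application of the bistable well-posedness theory of \cite{CHENGUOWU2008}.
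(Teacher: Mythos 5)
Your overall architecture matches the paper's proof: reduce to the bistable theory of \cite{CHENGUOWU2008}, order the two endpoint equilibria, and use ${\rm (HS)}$ together with cases (a)/(b) to exclude (marginally) stable equilibria strictly between them. Your eigenvalue argument for asymptotic stability of $\gu_{\gw_\pm}(a,d)$ (symmetry of $D_1G$, so real eigenvalues, plus non-vanishing of the determinant along the defining curve, so no sign change) is correct and in fact more explicit than anything written in the paper, and your dichotomy between (a) and (b) is handled exactly as the paper does.

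However, there is a genuine gap at the core: the two-directional comparison lemma that you yourself flag as ``the main obstacle'' is precisely the technical heart of the matter, and the tools you cite for it do not suffice. A ``comparison principle for the equilibrium problem'' combined with ``the uniqueness from Corollary \ref{cor:mcr:unq:props}'' cannot prevent component-wise crossings: Corollary \ref{cor:mcr:unq:props} only gives distinctness of $\gu_{\gw_A}(a,d)$ and $\gu_{\gw_B}(a,d)$ as \emph{vectors}, and two distinct vectors can still share some components while other components are ordered the wrong way. What is missing is the paper's Lemma \ref{l:one:coordinate:implies:equatity}: for $d>0$, if two equilibria satisfy $\gu\le\gv$ and agree in one component $i$, then subtracting the $i$-th equilibrium equations yields $\gu_{i-1}+\gu_{i+1}=\gv_{i-1}+\gv_{i+1}$, which by the ordering forces $\gu_{i\pm1}=\gv_{i\pm1}$, and propagating around the cycle gives $\gu=\gv$. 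This rigidity statement is what converts vector-level distinctness into the no-crossing property, and it drives both the continuation of the strict ordering along parameter paths and the exclusion from the box $\mathcal{K}$ of all equilibria $\gu_{\gw}(a,d)$ whose words fail $\gw_-\le\gw\le\gw_+$. A second unaddressed difficulty is your claim that order preservation ``is transparent at $d=0$'': at $d=0$ the equilibria of comparable stable types are only \emph{non-strictly} ordered, coinciding in every coordinate where the words agree, so soft continuation does not even get started there. The paper's Lemma \ref{l:ordering} resolves this tangency by an induction on implicit $d$-derivatives at $d=0$, showing that the $i$-th components of the two branches separate at order exactly equal to the distance from $i$ to the nearest index where the words differ; one could instead develop your hint via the monotone semiflow of the cooperative system $\dot\gu=G(\gu;a,d)$ and a basin-of-attraction argument for small $d$, but either way these two lemmas must actually be proved rather than postulated, and without them the proposal does not close.
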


\subsection{Symmetries}
\label{sec:mcr:equiv}

There are a number of useful symmetries present in the 
equilibrium equation $G(\gu;a, d) = 0$
and the travelling wave MFDE \sref{eq:mcr:wave:mfde}. We explore three important
transformations here that significantly reduce the number of cases that need to be considered.

We first note that the identity $g(1 - u; a) = -g(u; 1 - a)$ implies that
\begin{equation}
\label{eq:mcr:inv:symm:G}
G(\mathbf{1} - \gu; a, d) = -G( \gu; 1 - a, d)
\end{equation}
holds for all $\gu \in \Real^n$, with $\mathbf{1} = (1, \ldots, 1)^\top \in \Real^n$.
In order to exploit this, we pick $\gw \in \{\mathfrak{0},\mathfrak{a},\mathfrak{1}\}^n$
and write $\gw_{\mathfrak{0} \leftrightarrow \mathfrak{1}} \in \{ \mathfrak{0}, \mathfrak{a}, \mathfrak{1} \}^n$
for the `inverted' word
\begin{equation}
 \big(\gw_{\mathfrak{0} \leftrightarrow \mathfrak{1}}\big)_i = 
 \left\{ \begin{array}{lcl} 
                             \mathfrak{1} & &\hbox{if } \gw_i = \mathfrak{0}, \\
                             \mathfrak{a} & & \hbox{if } \gw_i = \mathfrak{a},  \\
                             \mathfrak{0} & &\hbox{if } \gw_i = \mathfrak{1}. \\
                         \end{array}
                        \right.
\end{equation}
The identity \sref{eq:mcr:inv:symm:G} directly implies 
the reflection relation
\begin{equation}
\Omega_{\gw_{\mathfrak{0} \leftrightarrow \mathfrak{1}} } = \{ ( a, d) : (1 - a, d) \in \Omega_{\gw} \},
\end{equation}
together with
\begin{equation}
    \gu_{\gw_{\mathfrak{0} \leftrightarrow \mathfrak{1}}} (a, d) = \mathbf{1} - \gu_{\gw}( 1 - a,d) .
\end{equation}

We proceed by introducing the 
coordinate-shifts $\{T_k\}_{k=1}^n: \Real^n \to \Real^n$
and reflection $R: \Real^n \to \Real^n$
that act as
\begin{equation}
\big(T_k \gu\big)_i = \gu_{\mod(i + k , n)},
\qquad \qquad
\big(R \gu \big)_i = \gu_{  \mod( 1-i ,n) } .
\end{equation}
It is easy to verify that the identity $G(\gu; a, d) = 0 $
implies that also
\begin{equation}
    G(T_k \gu; a, d) = G(R \gu ; a, d) =  0 .
\end{equation} 
In addition, when $(c,\Phi)$
is a solution to the travelling wave MFDE \sref{eq:mcr:wave:mfde},
the same holds for the coordinate-shifted pair $(c,T_k \Phi)$ and the reflected version
$(-c, \tilde{\Phi})$ with
\begin{equation}
  \label{eq:mcr:rev:wave}
    \tilde{\Phi}(\xi) = R \Phi(-\xi).
\end{equation}
These identities are all consequence of the invariance
of the Nagumo LDE \sref{eq:mcr:nagumo:lde} under
the transformations $j \mapsto j + k$ and $j \mapsto - j$.

Our choice to only consider waves where $\Phi(-\infty) \le \Phi(+\infty)$
breaks the reflection invariance \sref{eq:mcr:rev:wave},
which allows us to focus solely on the symmetry caused by the 
coordinate shifts $T_k$. In particular,
for any word $\gw \in \{\mathfrak{0},\mathfrak{a},\mathfrak{1}\}^n$,
we write
\begin{equation}
    \Omega_{[\gw]} = \Omega_{\gw} = \Omega_{T_1 \gw} = 
    \ldots = \Omega_{T_{n-1} \gw }.
\end{equation}
In addition, we introduce the shorthand notation
\begin{equation}
    [\gw] 
    = \{ \gu_{\gw} , T_1 \gu_{\gw}, \ldots , T_{n-1} \gu_{\gw} \} 
\end{equation}
to refer to all the roots in the corresponding equivalence class,
which are defined for $(a,d) \in \Omega_{[\gw]}$.

Note that we are hence treating $[\mathfrak{0a1}]$ and $[\mathfrak{01a}]$ as 
separate classes, even though they correspond
with equivalent equilibria of $G(\cdot ; a, d) = 0$.
Interestingly enough, when restricting attention to the alphabet
$\{\mathfrak{0},\mathfrak{1}\}^n$, this distinction 
only plays a role for $n \ge 6$.
For example, $\mathfrak{001011}$ is not shift-related to its reflection $\mathfrak{110100}$,
but all shorter binary sequences are.

\begin{figure}[t]
\begin{center}
\includegraphics[width=1\textwidth]{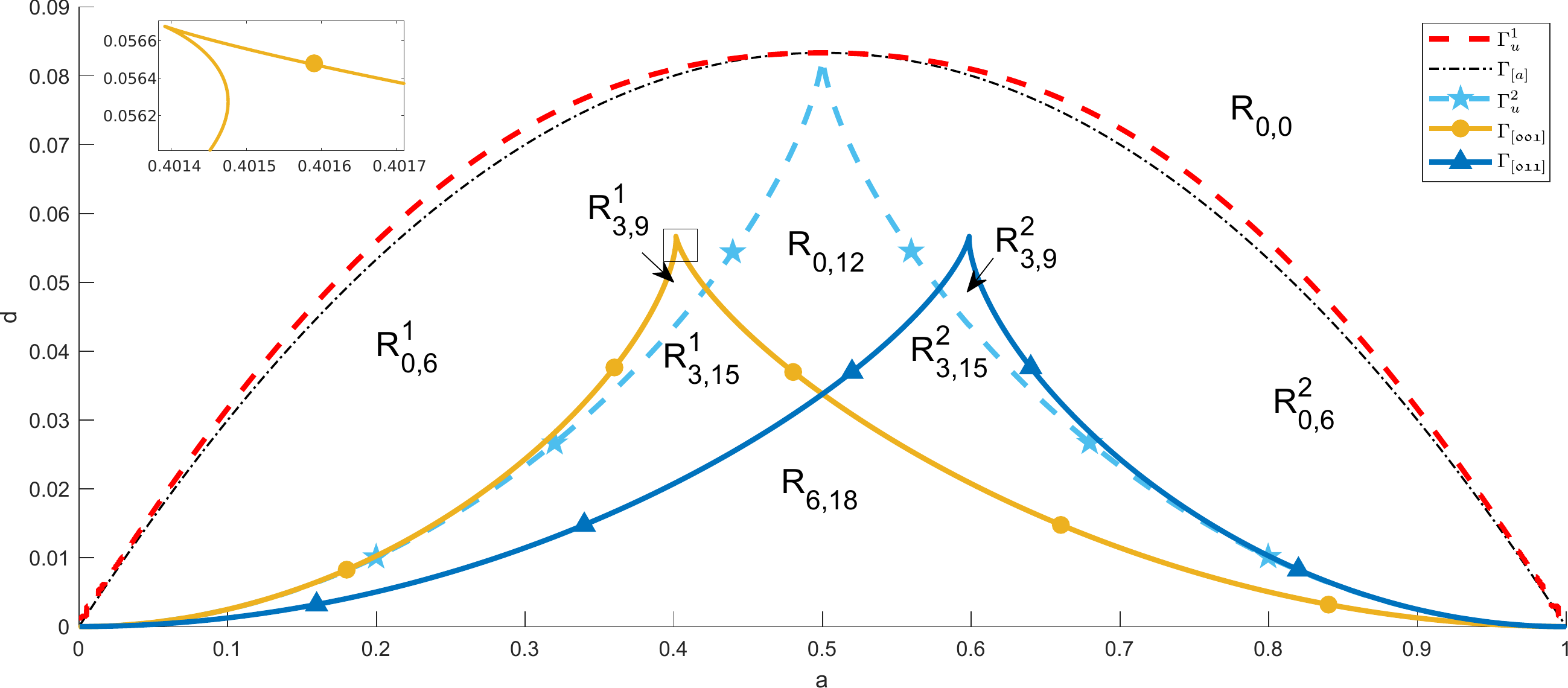}
\caption{Bifurcation thresholds for the number of roots of \eqref{eq:tcr:def:G}. 
The subscripts in the regions $R$ indicate 
the number of stable and unstable roots present in each region,
counting their multiplicities but excluding the homogeneous roots
$[\mathfrak{0}]$,
$[\mathfrak{a}]$ and $[\mathfrak{1}]$.
} 
\label{tric.fig:root_thres}
\end{center}
\end{figure}

Whenever we are referring to an 
equivalence class of roots,
we will use the word that is the smallest in the 
lexicographical\footnote{For ordering purposes
we assume that $\mathfrak{0} < \mathfrak{a} < \mathfrak{1}$ 
holds.
} 
sense (the so-called Lyndon word) as a class representative.
For example,  $\mathfrak{0a11}$ is the Lyndon word for the class
\begin{equation}\label{eq:Lyndon:representative}
[\mathfrak{0a11}] = \{\gu_{\mathfrak{0a11}}, \gu_{\mathfrak{10a1}}, \gu_{\mathfrak{110a}}, \gu_{\mathfrak{a110}} \}.
\end{equation}
We note that shorter words with a length that divides $n$ can also
be interpreted as a word of length $n$ by periodic extension.
For example, we write
\begin{equation}
[\mathfrak{01}] = [\mathfrak{0101}] = \{\gu_{\mathfrak{0101}},\gu_{\mathfrak{1010}}\}.
\end{equation}

For any pair $\gw_\pm \in \{\mathfrak{0}, \mathfrak{1} \}^n$ with $\gw_- \le \gw_+$, 
we will use the 
shorthand notation $\gu_{\gw_-} \to \gu_{\gw_+}$ to refer to any pair $(c,\Phi)$
that satisfies the travelling wave MFDE \sref{eq:mcr:wave:mfde}
together with the boundary conditions \sref{eq:mcr:wave:bnd:cnds}.
The observations above show that $T_k \gu_{\gw_-} \to T_k \gu_{\gw_+}$ then corresponds
with the pair $(c, T_k \Phi)$.

In order to refer to an equivalence class of wave solutions,
we pick $\gw_- \le \gw_+$ and introduce the notation
\begin{equation}
    [\gw_- \to \gw_+] = \{ \gu_{\gw_-} \to \gu_{\gw_+}, T_1 \gu_{\gw_-} \to T_1 \gu_{\gw_+}, 
      \ldots , T_{n-1} \gu_{\gw_-} \to T_{n-1} \gu_{\gw_+} \} .
\end{equation}
Here we always take $\gw_-$ to be the Lyndon representative for 
its equivalence class,
but we emphasize that $\gw_+$  \emph{cannot} always 
be chosen in this way. 
For example, 
\begin{equation}
    [\mathfrak{001} \to \mathfrak{011}]= \{ \gu_{\mathfrak{001}} \to \gu_{\mathfrak{011}}, 
    \gu_{\mathfrak{100}} \to \gu_{\mathfrak{101}}, \gu_{\mathfrak{010}} \to \gu_{\mathfrak{110}} \} 
\end{equation}
refers to a different set of waves than
\begin{equation}
[\mathfrak{001} \to \mathfrak{101}] = \{ \gu_{ \mathfrak{001}} \to \gu_{\mathfrak{101}}, 
\gu_{\mathfrak{100}} \to \gu_{\mathfrak{110}}, 
\gu_{\mathfrak{010}} \to \gu_{\mathfrak{011}} \}.
\end{equation}
Naturally, this distinction disappears when considering
waves that connect to or from one of the monochromatic states 
$[\mathfrak{0}]$ and $\mathfrak{[1]}$.

\section{Trichromatic waves}
\label{sec:tri}

\begin{figure}[t]
\begin{center}
\includegraphics[width=1\textwidth]{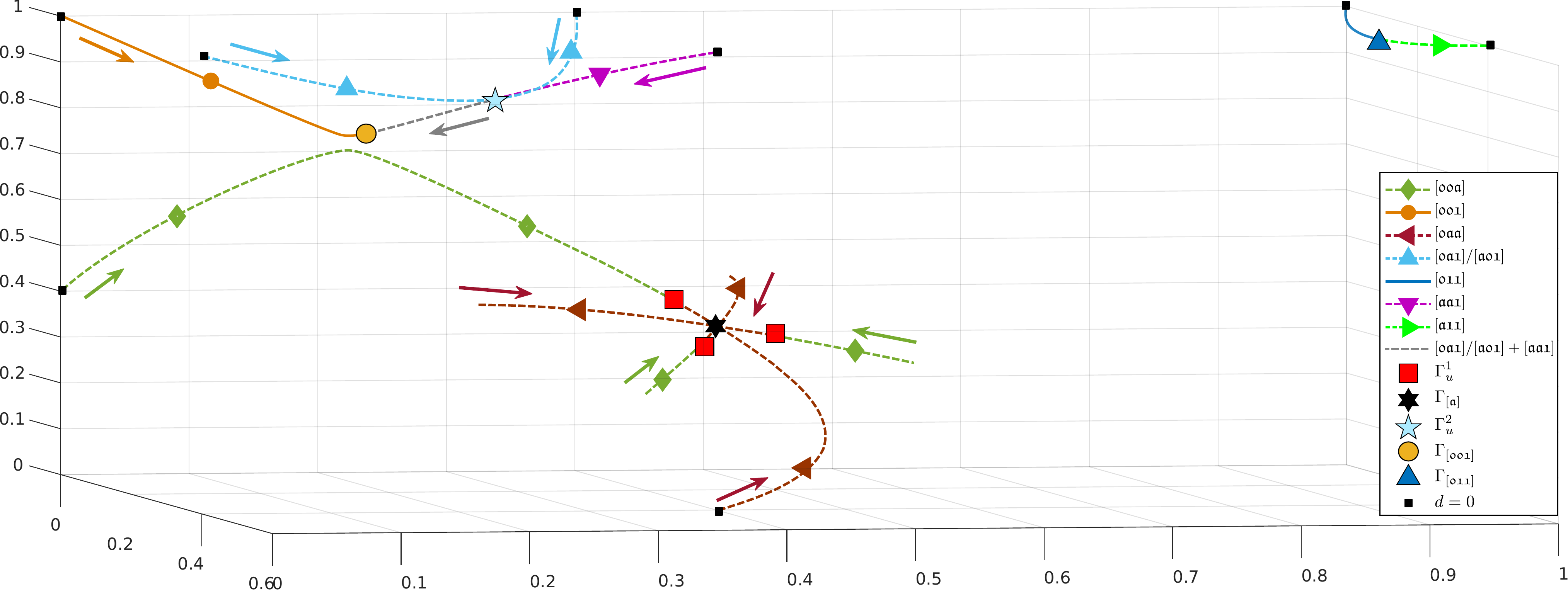}
\caption{Root trajectories as  $d$ is increased for $a=0.401 < a_c$.
The continuous lines represent stable roots branches, while the 
dashed ones are unstable. In principle only one element of each root class is displayed. However, we (locally) plot all three permutations of $\mathfrak{0aa}$
to show how they all collide with 
$(a,a,a)$ at  $\Gamma_{[\mathfrak{a}]}$ and subsequently pass through each other.
Finally, they hit the three permutations of $\mathfrak{00a}$ 
at $\Gamma_u^1$ and disappear. We emphasize that 
$\mathfrak{a01}$ is not a Lyndon word, but we need to use it
here to describe the $\Gamma_u^2$ collision.
}
 \label{tric.fig:root_branch_a401}
\end{center}
\end{figure}

In this section we apply our results to the trichromatic case $n =3$. As in the bichromatic case  considered in \cite{HJHBICHROM}, we observe that  stable $3$-periodic equilibria can only exist inside the parameter region where the monochromatic $[\mathfrak{0} \to \mathfrak{1}]$ wave is pinned. The novel behaviour as compared to the bichromatic case is that there exist intervals
of the parameter $a$ in which the number of (stable) equilibria
can actually increase as $d$ is increased (e.g., for $a=0.40146$, see Figures \ref{tric.fig:root_thres} and \ref{tric.fig:root_branch_a40146}). In addition,
for values of $a$ in these intervals, 
there are two disjoint intervals of parameters $d$ for which \emph{travelling}
trichromatic waves exist that connect the homogeneous
states $[\mathfrak{0}]$ and $[\mathfrak{1}]$ to a stable $3$-periodic
equilibrium. Similarly, for a fixed $d$, the number of (stable) equilibria can increase as $\left| a- \frac{1}{2} \right|$ is increased (e.g., for $d=0.04$, see Figure \ref{tric.fig:root_thres}).

\subsection{Equilibria}
\label{sec:tri:eq}

\begin{figure}[t]
\begin{center}
\includegraphics[width=1\textwidth]{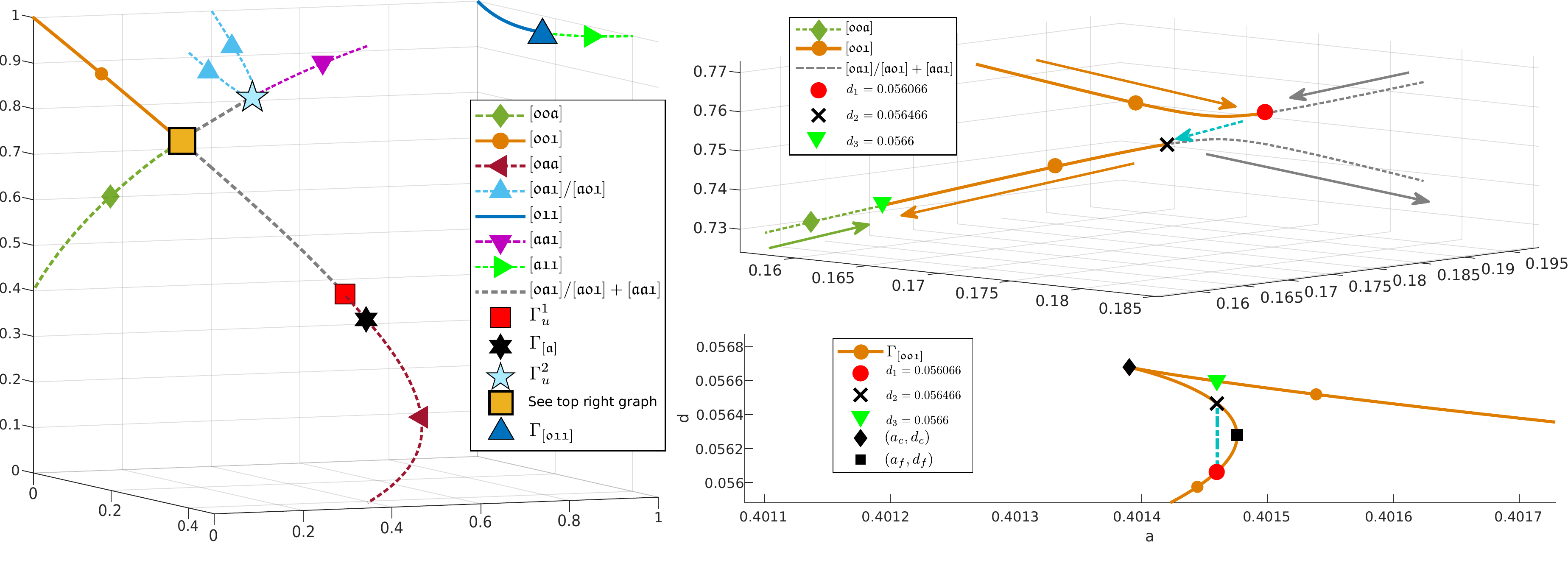}
\caption{Behaviour of the root branches near the cusp
of the yellow $\Gamma_{[\mathfrak{001}]}$ curve from Figure \ref{tric.fig:root_thres},
which is magnified in the bottom right panel.
In particular, we fix $a=0.40146 \in (a_c, a_f)$
and note that  $\Gamma_{[\mathfrak{001}]}$ 
is crossed three times  as $d$ is increased.
The top right panel contains a zoom of the yellow square from the left panel, which is impacted by these crossings.
The blue dashed line between the red circle and black star represents the region where two of the roots are temporarily complex. These plots illustrate the mechanism 
by which the stable branch $[\mathfrak{001}]$ switches
its unstable connecting branch as $a$ is increased through
the cusp and fold points $a_c$ and $a_f$.
} 
\label{tric.fig:root_branch_a40146}
\end{center}
\end{figure}

Trichromatic equilibria $\gu \in \Real^3$ for \sref{eq:mcr:nagumo:lde}
correspond to roots of the nonlinear function
\begin{equation}
\label{eq:tcr:def:G}
G(\gu;a,d) = \begin{pmatrix}
d (\gu_3-2\gu_1+\gu_2) + g\big(\gu_1; a\big) \\
d (\gu_1-2\gu_2+\gu_3) + g\big(\gu_2; a\big) \\
d (\gu_2-2\gu_3+\gu_1) + g\big(\gu_3; a\big) \\
\end{pmatrix}.
\end{equation}
Inspection of this system shows that one component
can be removed if one enforces either $\gu_1 = \gu_2$,
$\gu_2 = \gu_3$ or $\gu_1 = \gu_3$.

In Figure \ref{tric.fig:root_thres} we numerically
computed the critical set $\Gamma$ defined in 
\sref{eq:mcr:def:Gamma} by searching for roots of the augmented system
\begin{equation}
G_*(\gu; a,d) =
  \begin{pmatrix}
    G(\gu; a,d) \\
    \det D_1 G(\gu; a, d)  \\
  \end{pmatrix} .
\end{equation}
The results show that $\Gamma$ can be decomposed into 5 piecewise-smooth curves
that we label as 
$\Gamma_{[\mathfrak{a}]}$, $\Gamma_{[\mathfrak{001}]}$, $\Gamma_{[\mathfrak{011}]}$,
  $\Gamma_u^1$ and  $\Gamma_u^2$. 
The labels of the form 
 $\Gamma_{[\gw]}$ imply that $\Gamma_{[\gw]} \subset \partial \Omega_{[\gw]}$.
%
We emphasize that this naming scheme is ambiguous by its very nature.
Indeed, collisions between roots occur precisely on these curves, which also
intersect each other.

These 5 curves divide the remaining parameter space 
$\mathcal{H} \setminus \Gamma $ into 11 open 
and simply connected components. 
Ignoring the three homogeneous roots\footnote{
In the current trichromatic context, these roots  
are given by $(0,0,0)$, $(a,a,a)$ and $(1,1,1)$.
} $[\mathfrak{0}]$,
$[\mathfrak{a}]$ and $[\mathfrak{1}]$,
the bottom region $R_{6,18}$ contains
$6$ stable roots and $18$ unstable roots.
The stable roots are represented by the equivalence classes
$[\mathfrak{001}]$ and $[\mathfrak{011}]$,
while all the remaining equivalence classes generate
the unstable roots. 

In the discussion below
we indicate how this configuration changes as each of the
critical curves is crossed.
For now, we recall the identity
\sref{eq:mcr:inv:symm:G}, which explains the reflection symmetry through the 
line $a = \frac{1}{2}$ and allows us to focus on the case $a \in (0, \frac{1}{2}]$.

\paragraph{The $\Gamma_{[\mathfrak{011}]}$ threshold}
The first threshold that is encountered when increasing $d$ for $a \in (0, \frac{1}{2})$ is the curve $\Gamma_{[\mathfrak{011}]}$. Here
the stable roots $[\mathfrak{011}]$ collide with the unstable
roots $[\mathfrak{a11}]$, after which both branches disappear.
This collision is visible in Figures \ref{tric.fig:root_branch_a401},
\ref{tric.fig:root_branch_a40146}
and \ref{tric.fig:root_branch_a41_a48}.

In order to find an expansion for this threshold near the corner
$(a,d) = (0,0)$, we exploit the observations above
which allow us to consider equilibria close to $(0,1,1)$
for which the second
and third components are equal.
In particular, we construct solutions
to the problem
\begin{equation}
  G\big( (x, 1 + y, 1 + y) ; a , d\big) = 0
\end{equation}
for which $\abs{x} + \abs{y} + \abs{a}  + \abs{d}$ is small.
The resulting system has a structure that is very similar
to that encountered in \cite{HJHBICHROM}, allowing
us to follow the exact same procedure to unfold
the saddle node bifurcations. In particular,
viewing $\Gamma_{[\mathfrak{011}]}$ locally as the graph of the function 
$d_{[\mathfrak{011}]}$,
we obtain the expansion
\begin{equation}
d_{[\mathfrak{011}]}(a)= \frac{a^2}{8} +\frac{a^4}{64} +O(a^5) ,
\end{equation}
together with
\begin{equation}
 \gu_{011}\big(a ; d_{[\mathfrak{011}]}(a) \big) 
 = \big( \frac{a}{2} ,
 1- \frac{a^2}{8} -\frac{a^3}{16} -\frac{a^4}{64},
 1- \frac{a^2}{8} -\frac{a^3}{16} -\frac{a^4}{64}
 \big) + O( a^5).
\end{equation}

\begin{figure}[t]
\begin{center}
\includegraphics[width=1\textwidth]{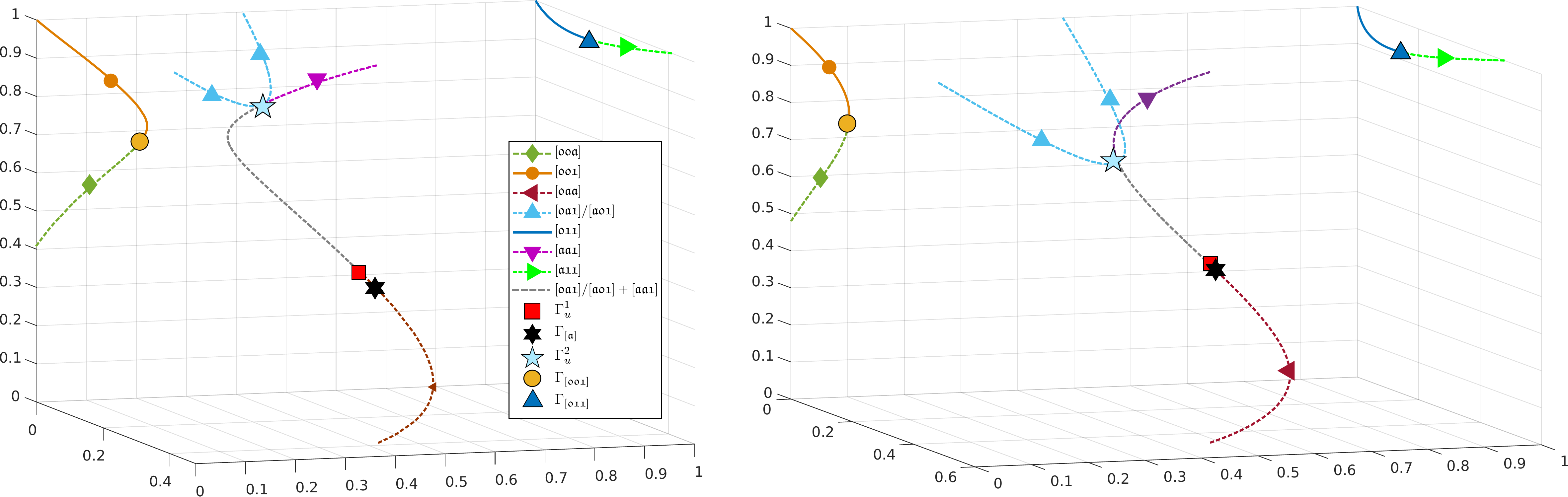}
\caption{Root branches for $a=0.41 > a_f$ (left) and $a=0.48 >a_f$ (right).
The branch $[\mathfrak{001}]$ now collides with $[\mathfrak{00a}]$,
while the unstable branch spawned by the $\Gamma_u^2$ collision 
collides with $[\mathfrak{0aa}]$ at $\Gamma_u^1$.
The collision points $\Gamma_u^1$ and $\Gamma_u^2$ slide through $(a,a,a)$ as 
$a$ is increased beyond 
$a = \frac{1}{2}$.
} 
\label{tric.fig:root_branch_a41_a48}
\end{center}
\end{figure}

\paragraph{The $\Gamma_u^2$ threshold}
This curve features the triple collision of the unstable branches
$[\mathfrak{0a1}]$, $[\mathfrak{a01}]$ and $[\mathfrak{aa1}]$ 
when $a \in (0, \frac{1}{2})$. One unstable branch of roots
emerges from this collision.
This collision is visible in Figures \ref{tric.fig:root_branch_a401},
\ref{tric.fig:root_branch_a40146} and \ref{tric.fig:root_branch_a41_a48}.

\paragraph{The $\Gamma_{[\mathfrak{001}]}$ threshold}
The important feature of this curve is that
$d$ cannot be expressed as a function of $a$. Indeed,
the function
\begin{equation}
    G_{**}(\gu; a, d) = 
      \begin{pmatrix}
         G_*(\gu; a, d) \\
        \det D_{1,3} G_*(\gu; a, d) \\
      \end{pmatrix}
\end{equation}
has the (numerically computed) roots
\begin{equation}
(a_c,d_c)\approx(0.4013889, 0.05668 ),
\qquad  \qquad
(a_f, d_f )\approx(0.401476,0.056275 ),
\end{equation}
which correspond with a cusp respectively fold 
point for the curve $\Gamma_{[\mathfrak{001}]}$;
see Figures \ref{tric.fig:root_thres} and \ref{tric.fig:root_branch_a40146}.

When $a \in (0, a_c)$, the root $[\mathfrak{001}]$ hits the branch spawned by
the $\Gamma_u^2$ collision and disappears; see Figure \ref{tric.fig:root_branch_a401}.
On the other hand, for $a \in (a_f, 1)$ the root
$[\mathfrak{001}]$ hits $[\mathfrak{00a}]$; see Figure \ref{tric.fig:root_branch_a41_a48}. This corresponds
with the scenario described above for 
$\Gamma_{[\mathfrak{011}]}$ after applying
the $\mathfrak{0} \leftrightarrow \mathfrak{1}$ swap.

The intermediate case $a \in (a_c, a_f)$ is illustrated
in Figure \ref{tric.fig:root_branch_a40146}.
In this case the root $[\mathfrak{001}]$ again hits the branch spawned by
the $\Gamma_u^2$ collision and disappears, but this pair reappears
and splits off from each other after $d$ 
crosses the $\Gamma_{[\mathfrak{001}]}$ 
threshold a second time. The stable branch
of this pair collides with $[\mathfrak{00a}]$ and disappears when $d$
crosses $\Gamma_{[\mathfrak{001}]}$ for the third and final time, while
the unstable branch emerges as the survivor of the full triple crossing 
process.

The black star and green triangle in Figure \ref{tric.fig:root_branch_a40146}
overlap when $a = a_c$, in which case the $[\mathfrak{00a}]$ branch
has a triple root at the critical value $d = d_c$.
On the other hand, the black star and red circle in this figure overlap
when $a = a_f$, in which case the branch
$[\mathfrak{001}]$ and the branch spawned by the 
$\Gamma_u^2$ collision can be said
to bounce off each other at $d=d_f$.

In order to find an expansion for this threshold near the corner
$(a,d) = (0,0)$, we now look for solutions to
\begin{equation}
  G\big( (x, x, 1 + y) ; a , d\big) = 0
\end{equation}
for which $\abs{x} + \abs{y} + \abs{a}  + \abs{d}$ is small.
Viewing $\Gamma_{[\mathfrak{001}]}$ locally as the graph of the function 
$d_{\mathfrak{[001]}}$,
we may again use the same procedure as
in \cite{HJHBICHROM} to obtain the expansion
\begin{equation}
d_{\mathfrak{[001]}}(a)= \frac{a^2}{4} +\frac{a^4}{8} +O(a^5) ,
\end{equation}
together with 
\begin{equation}
 \gu_{001}\big(a ; d_{[\mathfrak{001}]}(a) \big) 
 = \big( \frac{a}{2} ,
 \frac{a}{2} ,
 1- \frac{a^2}{8} -\frac{a^3}{16} -\frac{a^4}{64}
 \big) + O( a^5).
\end{equation}

\paragraph{The $\Gamma_{[a]}$ threshold}
This curve is characterized by the relation $\det D_{1}G ([\mathfrak{a}] ; a, d) = 0$,
which can be explicitly solved to yield
$d_{[\mathfrak{a}]}(a) = a(1-a)/3$. As shown in Figure  \ref{tric.fig:root_branch_a401},
the three branches of roots contained in the equivalence class 
$[\mathfrak{0 a a}]$
all pass through $(a,a,a)$ at $d =d_{[\mathfrak{a}]}$ and survive the collision.

\paragraph{The $\Gamma_u^1$ threshold}
On this threshold the unstable branch that survived
the $\Gamma_u^2$ and $\Gamma_{[\mathfrak{001}]}$ collisions hits
the branch $[\mathfrak{0aa}]$ that passed through $(a,a,a)$;
see Figures \ref{tric.fig:root_branch_a401},
\ref{tric.fig:root_branch_a40146},
\ref{tric.fig:root_branch_a41_a48}.
Above this threshold the only remaining equilibria
are the homogeneous states $[\mathfrak{0}]$ 
$[\mathfrak{a}]$ 
and $[\mathfrak{1}]$. 

\paragraph{The critical case $a = \frac{1}{2}$ }
The right panel in Figure  \ref{tric.fig:root_branch_a41_a48} describes
the situation just before $a$ reaches the critical
value $a = \frac{1}{2}$. Upon increasing $a$ through this value,
the collisions at $\Gamma_u^2$ and $\Gamma_u^1$ both cross through the center
$(\frac{1}{2}, \frac{1}{2}, \frac{1}{2})$ at $a = \frac{1}{2}$
and $d = d_{[a]}(\frac{1}{2}) = \frac{1}{12}$,
transitioning to occur on the $[\mathfrak{0aa}]$ branch.

\begin{figure}[t]
\begin{center}
\includegraphics[width=1\textwidth]{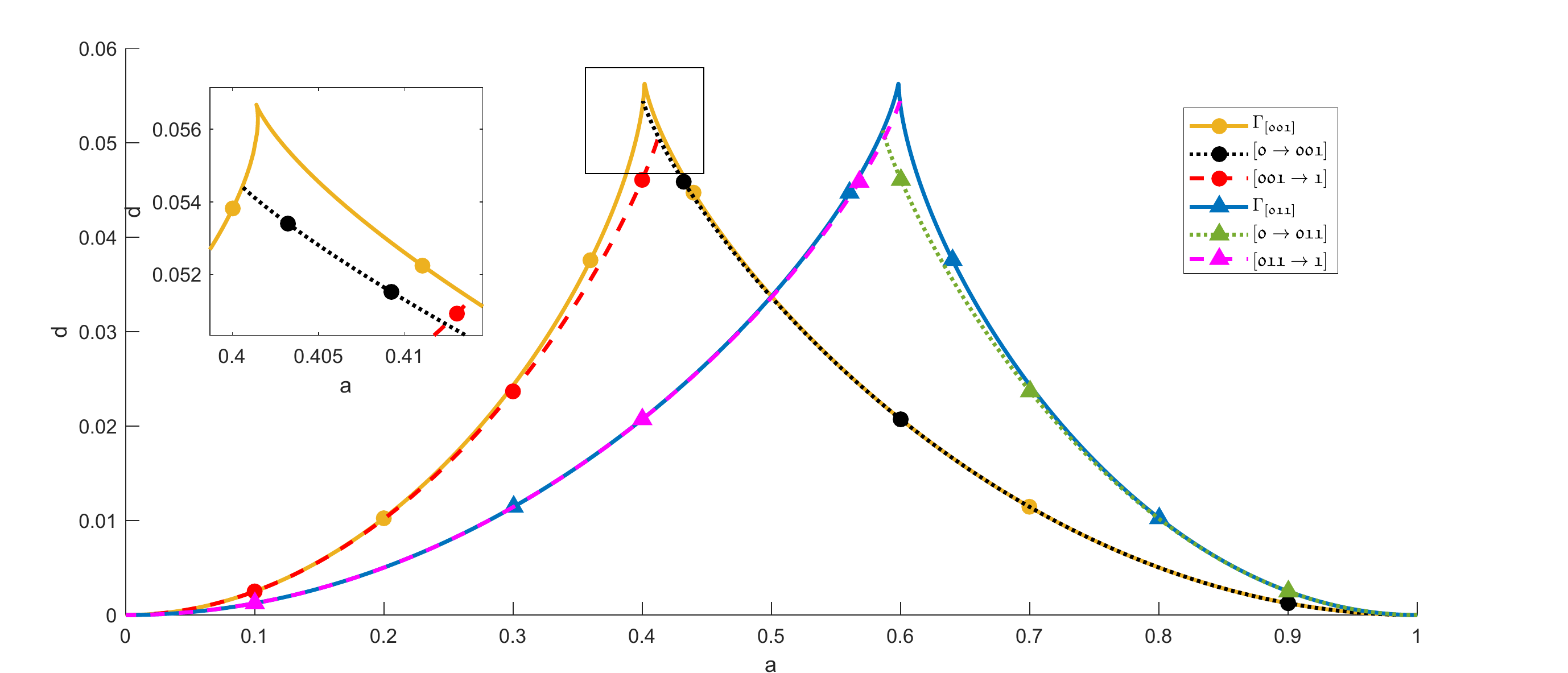}
\caption{Thresholds where standing waves ($c = 0$) transition
to travelling waves ($c \neq 0$)
for the connections to and from
the homogeneous equilibria $[\mathfrak{0}]$ 
and $[\mathfrak{1}]$. 
} 
\label{tric.eq:Speed_thres}
\end{center}
\end{figure}

\subsection{Wave Connections}

Our numerical results strongly suggest that
${\rm (H\Omega 1)}$, ${\rm (H\Omega 2)}$ and ${\rm (HS)}$ are satisfied,
allowing us to apply the results in {\S}\ref{sec:mcr}.
Figure \ref{fig:diag:tri:quad:conn} represents the
equivalence classes of wave connections between neighbouring words.
We note that we are not drawing
an edge from $\mathfrak{101}$ because this is not a Lyndon word.

\begin{figure}[t]
\begin{minipage}{0.5\textwidth}
\begin{center}
\begin{tikzpicture}
\node (a) at (0,2) [circle] {$\mathfrak{0}$};
\node (b) at (2,2) [circle] {$\mathfrak{001}$};
\node (c) at (4,4) [circle] {$\mathfrak{011}$};
\node (d) at (4,1) [circle] {\textcolor{gray}{$\mathfrak{101}$}};
\node (f) at (6,2) [circle] {$\mathfrak{1}$};
\draw (a) edge[style={->}] (b) (b) edge[style={->}] (c) (b) edge[style={->}] (d)  (c) edge[style={->}] (f) ;
\draw (a) edge[bend left={-60},style={->}] (f);
\draw (a) edge[bend left={15},style={->,dashed}] node[above,yshift=0.4cm,xshift=-0.2cm] {$\Omega_{[\mathfrak{011}]} \setminus \overline{\Omega}_{[\mathfrak{001}]}$} (c);
\draw (b) edge[style={->,dashed}] node[above] {$\Omega_{[\mathfrak{001}]} \setminus \overline{\Omega}_{[\mathfrak{011}]} $} (f);
\end{tikzpicture}
\end{center}
\end{minipage}\
\begin{minipage}{0.5\textwidth}
\begin{center}
\begin{tikzpicture}
\node (a) at (0,2) [circle] {$\mathfrak{0}$};
\node (b) at (2,2) [circle] {$\mathfrak{0001}$};
\node (c) at (4,0) [circle] {$\mathfrak{01}$};
\node (d1) at (4,2) [circle] {\textcolor{gray}{$\mathfrak{1001}$}};
\node (d2) at (4,4) [circle] {$\mathfrak{0011}$};
\node (e1) at (6,2) [circle] {$\mathfrak{0111}$};
\node (e2) at (5,5.5) [circle] {\textcolor{gray}{$\mathfrak{1011}$}};
\node (f) at (8,2) [circle] {$\mathfrak{1}$};
\draw (a) edge[->] (b) (b) edge[->] (c) 
   (b) edge[->] (d1) (b) edge[->] (d2) (d2) edge[->] (e1) 
   (d2) edge[->] (e2)  (e1) edge[->] (f) ;
\draw (c) edge[->] (e1) ;
\draw (a) edge[->] (c) (c) edge[->] (f);
\draw (a) edge[bend left={+60},style={->,dashed}] 
  node[above, xshift=0cm,yshift=0.35cm] {$\Omega_{[\mathfrak{0011}]} \setminus \overline{\Omega}_{[\mathfrak{0001}]}$}    (d2);
\draw (a) edge[bend left={-80},style={->,dashed},looseness=1.4] 
node[right,xshift=0,yshift=-0.3cm] 
  {$\Omega_{[\mathfrak{0111}]} \setminus 
    \overline{\Omega}_{[\mathfrak{0001}]} \cup \overline{\Omega}_{[\mathfrak{01}]} \cup \overline{\Omega}_{[\mathfrak{0011}]} $} (e1);
\draw (d2) edge[style={->,dashed}] 
node[above, xshift=-0.2cm,yshift=0.6cm]  {$\Omega_{[\mathfrak{0011}]} \setminus \overline{\Omega}_{[\mathfrak{0111}]}$} (f);
\draw (b) edge[bend left={+70},style={->,dashed},looseness=2.5] node[above, xshift=0cm]  
{$\Omega_{[\mathfrak{0001}]} \setminus \overline{\Omega}_{[\mathfrak{0111}]} 
  \cup \overline{\Omega}_{[\mathfrak{01}]} \cup \overline{\Omega}_{[\mathfrak{0011}]}$} (f);
\end{tikzpicture}
\end{center}
\end{minipage}
\caption{This diagram depicts the trichromatic (left)
and quadrichromatic (right) wave connections predicted by the theory in {\S}\ref{sec:mcr}. 
A solid edge from $\gw_-$ to $\gw_+$ indicates that 
waves of type $[\gw_- \to \gw_+]$   exist 
for $(a,d) \in \Omega_{[\gw_+]} \cap \Omega_{[\gw_-]}$
with $d > 0$
by applying Theorem \ref{thm:mcr:waves} with option (a).
A dashed edge indicates that this connection is covered
by option (b). In this case, the connection 
is only guaranteed to exist in the parameter region displayed
next to the edge, because intermediate stable roots have 
to be ruled out. The asymptotics in 
{\S}\ref{sec:tri:eq} and {\S}\ref{sec:qdc:eq} guarantee that 
all the relevant parameter regions
are non-empty. In addition, the connections
$[\mathfrak{0001} \to \mathfrak{1101}]$, $[\mathfrak{0001} \to \mathfrak{0111}]$ and $[\mathfrak{0001} \to \mathfrak{1011}]$ 
are not listed because $\Omega_{[\mathfrak{0001}]} \cap \Omega_{[\mathfrak{0111}]}$ is contained in $\Omega_{[\mathfrak{0011}]}$. To prevent clutter,
we have also left out the monochromatic $[\mathfrak{0} \to \mathfrak{1}]$ connection
in the diagram on the right. The words that are gray are \textit{not}
Lyndon words and as such do not have any outgoing arrows;
see the discussion in {\S}\ref{sec:mcr:equiv}.
}
\label{fig:diag:tri:quad:conn}
\end{figure}
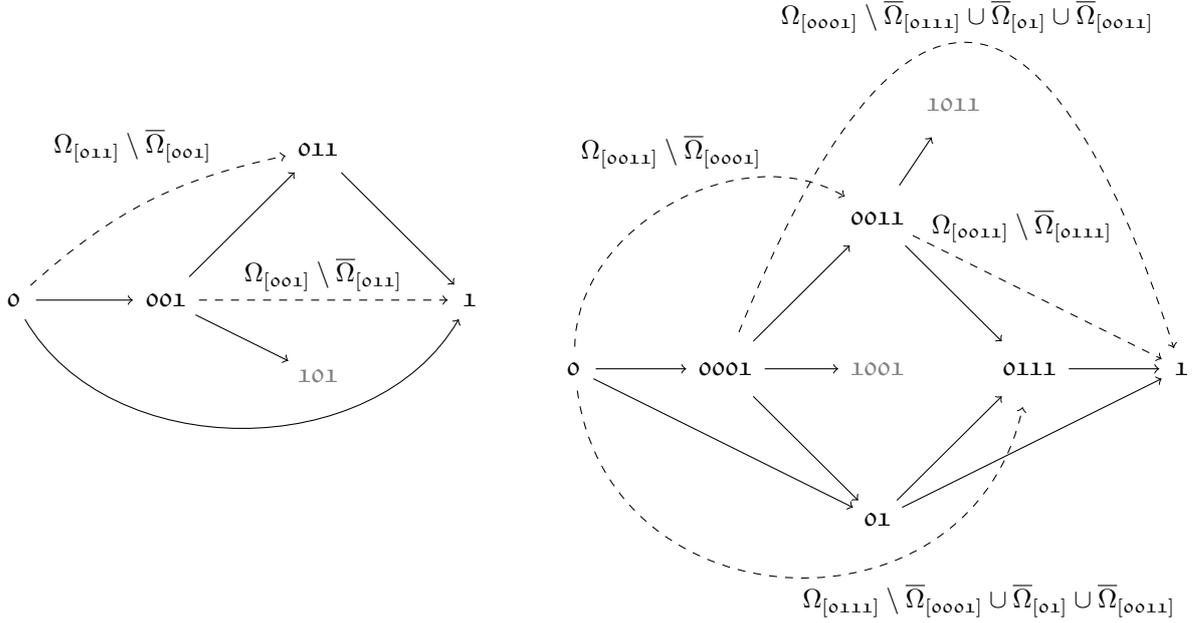

We recall that Theorem \ref{thm:mcr:waves} does not provide any information about the speed $c$ of the travelling wave. We therefore resorted
to numerics to find parameter values where $c \neq 0$ for the waves
discussed in the diagram above. This was done by connecting
the two endstates with a $\tanh$ profile and letting
this initial profile evolve under the flow of \sref{eq:mcr:nagumo:lde}. 
Exploiting the stability of the moving waves, one can test
whether $c = 0$ by determining whether movement ceases after an initial transient period.

For the $[\mathfrak{001} \to \mathfrak{011}]$
and $[\mathfrak{001} \to \mathfrak{101}]$
connections we were not able to find any  regions where $c\neq 0$.
However, in Figure \ref{tric.eq:Speed_thres} we can observe the numerically computed minimum threshold for $d$  where we in fact have $c \neq 0$
for the waves that connect to and from the homogeneous states
$[\mathfrak{0}]$ and $[\mathfrak{1}]$. Notice that
both types of waves have non-zero speed in the region around
the fold and cusp points of $\Gamma_{[\mathfrak{001}]}$. This indicates
that travelling waves can appear and disappear twice 
as the diffusion coefficient is increased,
which does not happen in the bichromatic case.

In Figure \ref{tric.fig:LDE_C_D} we give snapshots
of the collision process that occurs as
a $[\mathfrak{0} \to \mathfrak{001}]$ wave collides with a 
$[\mathfrak{001} \to \mathfrak{1}]$ wave.
In both cases an intermediate buffer-zone consisting
of the trichromatic state $[\mathfrak{001}]$ is consumed
by one (or both) of the incoming trichromatic waves,
leading eventually to a pinned monochromatic wave.
This type of collision can also be observed in the bichromatic setting.

\begin{figure}[t]
\begin{center}
\includegraphics[width=\textwidth]{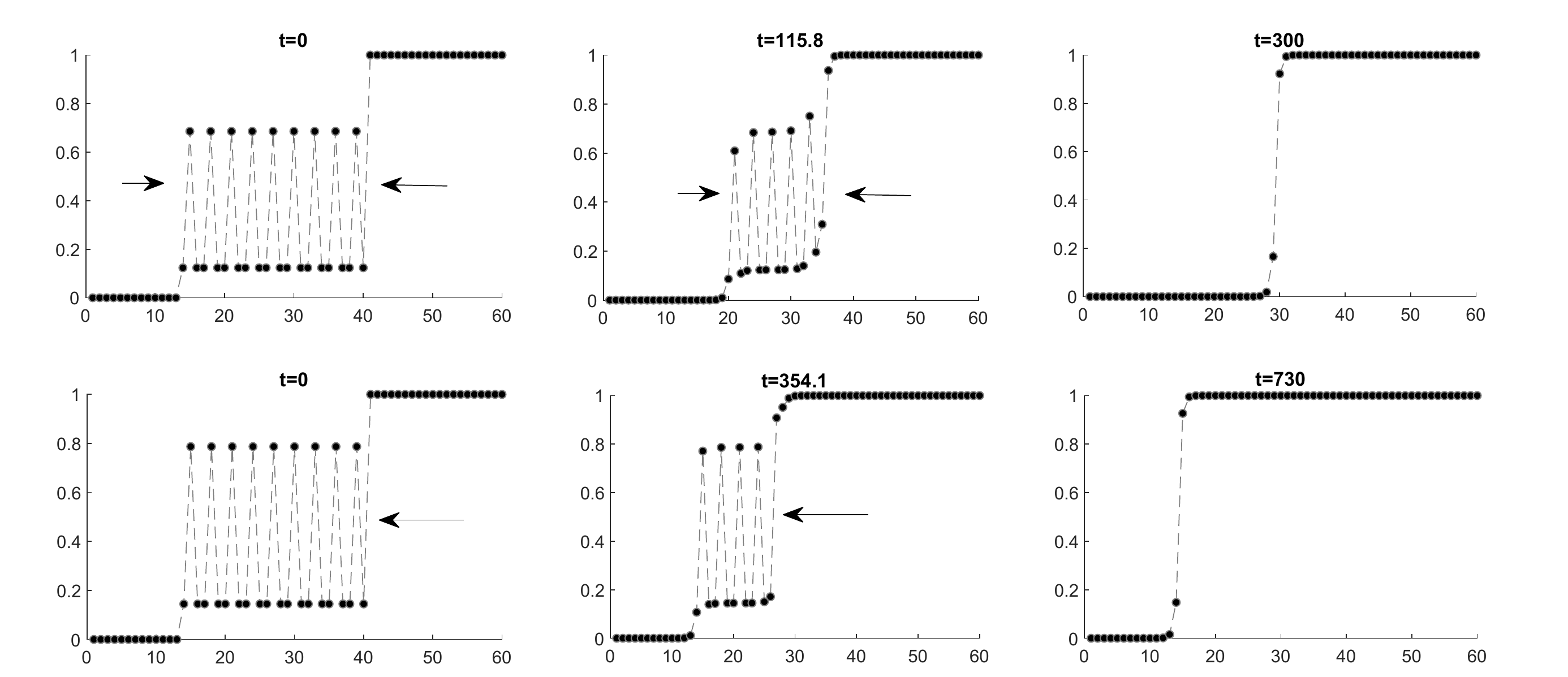}
\caption{
These panels describe two simulations of the 
LDE \sref{eq:mcr:nagumo:lde} that feature
a collision between a $[\mathfrak{0} \to \mathfrak{001}]$ wave
and a $[\mathfrak{001} \to \mathfrak{1}]$ wave.
For the top three panels we have
$a=0.404$ and $d=0.054$, which is above both trichromatic speed
thresholds.  However, it is still below the monochromatic
threshold. In particular, the buffer zone is now consumed
from both sides, but the end result is again a pinned
monochromatic front.
For the bottom three panels we have
$a=0.404$ and $d=0.05$,
which is now below the $[\mathfrak{0} \to \mathfrak{001}]$
but above the $[\mathfrak{001} \to \mathfrak{1}]$
speed thresholds. Here the $[\mathfrak{001}]$ buffer zone is consumed
from the right, resulting in a pinned monochromatic front.
} 
\label{tric.fig:LDE_C_D}
\end{center}
\end{figure}


\section{Quadrichromatic waves}
\label{sec:quad}

In this section we discuss the quadrichromatic case $n = 4$. As in the trichromatic case, 
stable $4$-periodic equilibria can disappear and reappear as $d$ is increased for a fixed $a$. The novel behaviour in this setting is that 
\emph{travelling} quadrichromatic waves can 
co-exist with \emph{travelling} 
monochromatic waves for an open set of parameters $(a,d)$.
This allows several new types of collisions to occur. For example,
two incoming connections with intermediate quadrichromatic states
can collide to form a monochromatic travelling wave.

\subsection{Equilibria}
\label{sec:qdc:eq}

The relevant nonlinearity that governs quadrichromatic
equilibria to \sref{eq:mcr:nagumo:lde} is now given by
\begin{equation}
\label{eq:qdr:def:G}
G(\gu;a,d) := \begin{pmatrix}
d (\gu_4-2\gu_1+\gu_2) + g\big(\gu_1; a\big) \\
d (\gu_1-2\gu_2+\gu_3) + g\big(\gu_2; a\big) \\
d (\gu_2-2\gu_3+\gu_4) + g\big(\gu_3; a\big) \\
d (\gu_{3}-2\gu_4+\gu_1) + g\big(\gu_4; a\big) \\
\end{pmatrix}.
\end{equation}
Inspecting this system shows that one component
can be removed 
by enforcing either $\gu_1 = \gu_3$
or $\gu_2 = \gu_4$. Of course, this problem reduces to the
bichromatic case $n=2$ if both these identities are enforced.
On the other hand, if one takes
\begin{equation}
\label{eq:qdc:red:aa:bb}
    \gu_1 = \gu_2 = \gu_A,
    \qquad \qquad
    \gu_3 = \gu_4 = \gu_B,
\end{equation}
the system $G(\gu; a, d) = 0$ reduces to
\begin{equation}
  \label{eq:qdr:red:sys:aa:bb:case}
    d( \gu_B - \gu_A ) + g( \gu_A ; a) = d ( \gu_A - \gu_B) + g(\gu_B ; a) = 0.
\end{equation}
This again corresponds to the bichromatic case $n =2$ 
but now with the halved diffusion 
coefficient $d_{\mathrm{\mathrm{bc}}} = \frac{1}{2} d$.

In Figure \ref{quad.fig:root_thres} we display several
numerically computed curves $\Gamma_{[\gw]}$ in the critical set $\Gamma$ that correspond
with the upper boundaries of the sets $\Omega_{[\gw]}$.
For visual clarity, we only consider the words
\begin{equation}
\gw \in \{ \mathfrak{0001},  \mathfrak{0011},
\mathfrak{01},\mathfrak{0111} \} ,
\end{equation}
which correspond with the stable non-homogeneous equilibria for $G(\gu; a, d) = 0$. 

The curves $\Gamma_{[\mathfrak{0001}]}$ and 
$\Gamma_{[\mathfrak{0111}]}$ again contain cusp and fold points. 
However, all four curves can \emph{locally} be described 
as a graph $d = d_{[\gw]}(a)$
near the corner $(a,d) = (0,0)$. We now set out
to compute the first two terms 
in the asymptotic expansion of each of these curves.

\begin{figure}[t]
\begin{center}
\includegraphics[width=1\textwidth]{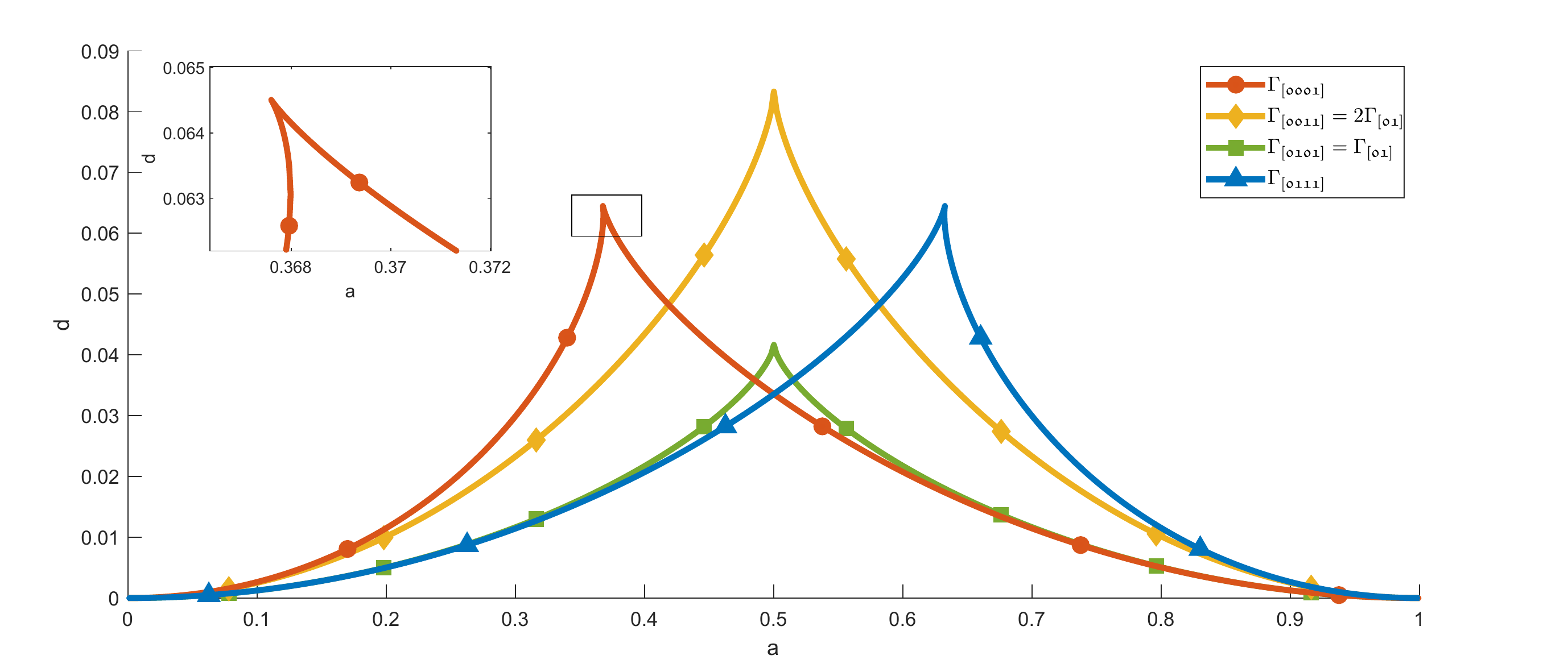}
\caption{Bifurcation thresholds for the four stable quadrichromatic 
root classes. The $\Gamma_{[\mathfrak{0001}]}$
and $\Gamma_{[\mathfrak{0111}]}$ curves again feature slanted cusps;
see the inset.} \label{quad.fig:root_thres}
\end{center}
\end{figure}

\begin{figure}[t]
\begin{center}
\includegraphics[width=1\textwidth]{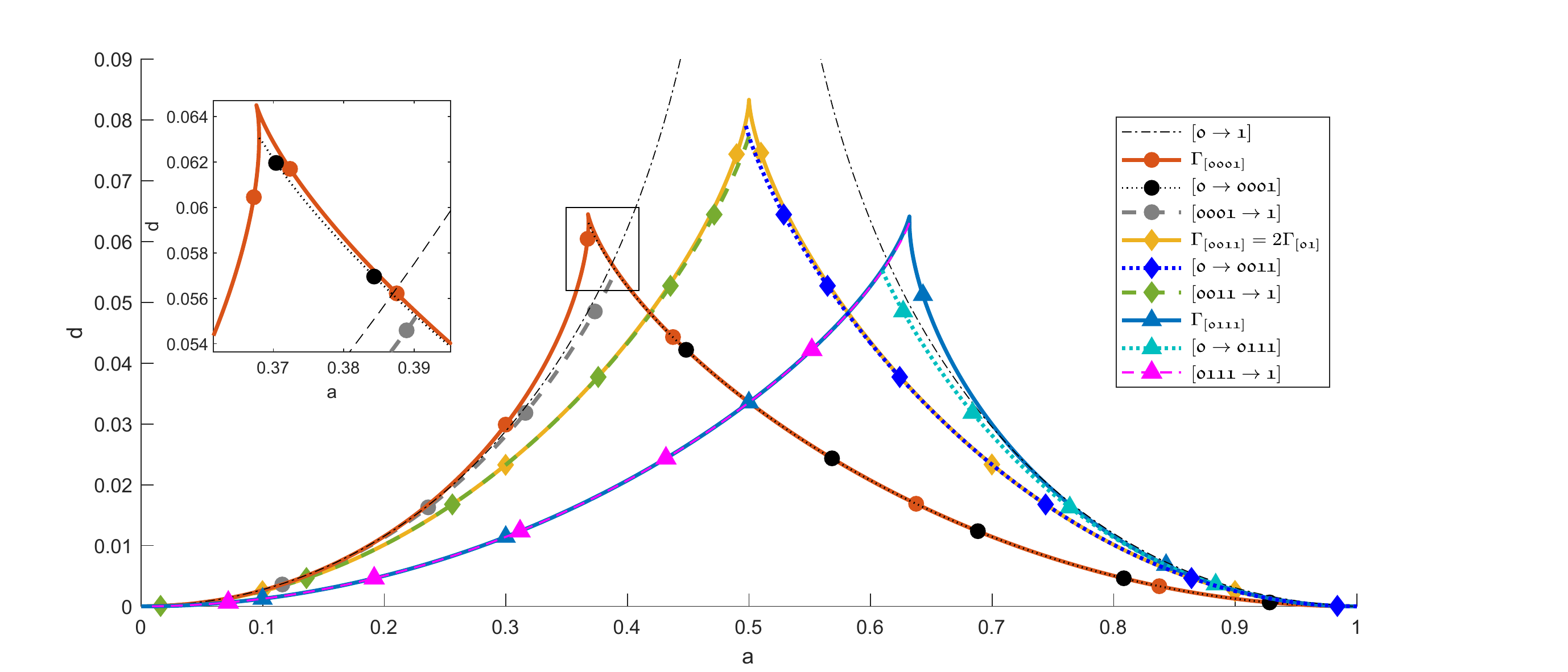}
\caption{Speed thresholds for the wave connections
to and from the stable roots 
$[\mathfrak{0001}]$, $[\mathfrak{0011}]$ and $[\mathfrak{0111}]$. 
The thresholds for $[\mathfrak{0101}]$ are not shown, as they coincide with the bichromatic 
results obtained in \cite{HJHBICHROM}.
} 
\label{quad.fig:speed_thres_complete}
\end{center}
\end{figure}

\begin{figure}[t]
\begin{center}
\includegraphics[width=1\textwidth]{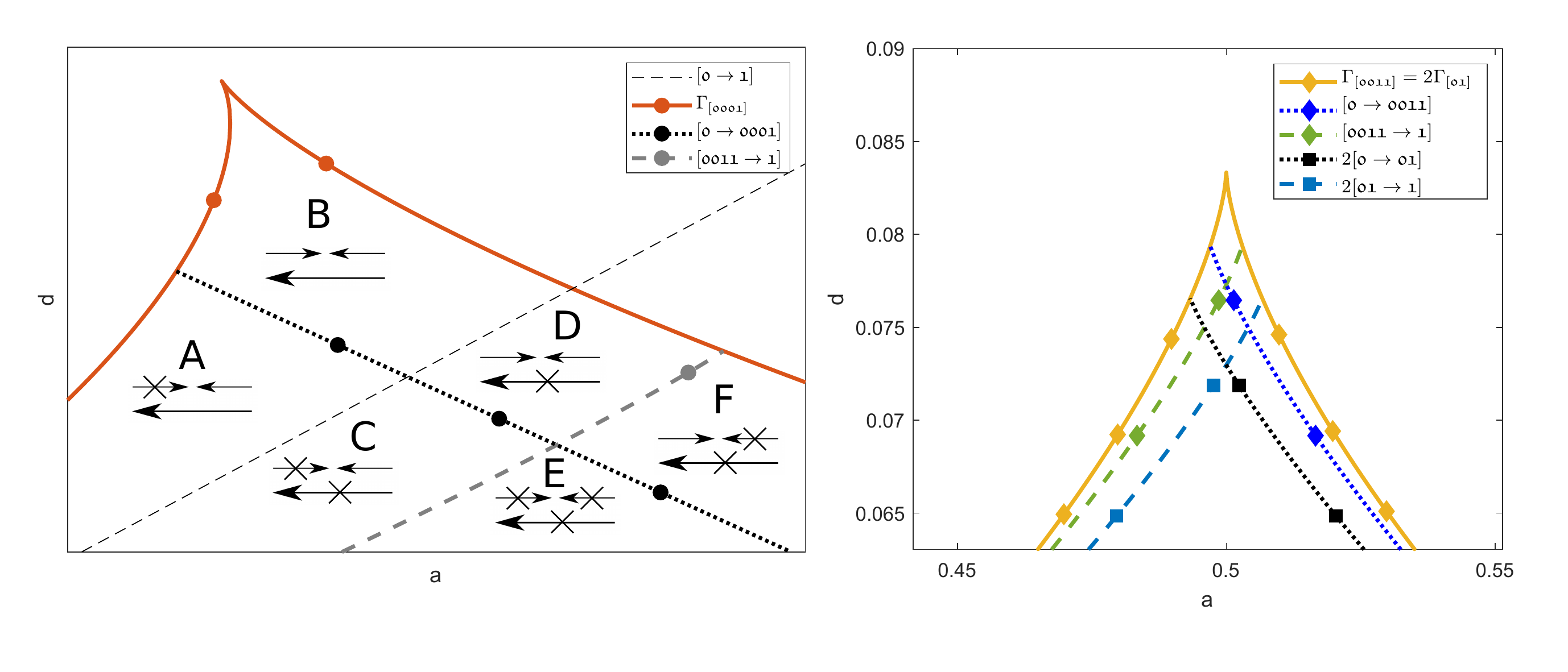}
\caption{The left panel
contains a schematic exaggeration of the area around
the cusp of $\Gamma_{[\mathfrak{0001}]}$,
together with a description of the collision type
that can be expected in each region. 
The top two arrows indicate the nature
of the $[\mathfrak{0} \to \mathfrak{0001}]$
and $[\mathfrak{0001} \to \mathfrak{1}]$ connections
that move towards each other. The bottom arrow
describes whether or not the resulting monochromatic 
wave is pinned.
The right panel compares
the speed threshold for the connections involving
$[\mathfrak{0011}]$ with the doubled
bichromatic thresholds. One can see
that the reduction \sref{eq:qdc:red:aa:bb}
respects the equilibrium structure
but not the wave structure of the system.
} 
\label{quad.fig:speed_thres_zoom}
\end{center}
\end{figure}

\begin{figure}[t]
\begin{center}
\includegraphics[width=1\textwidth]{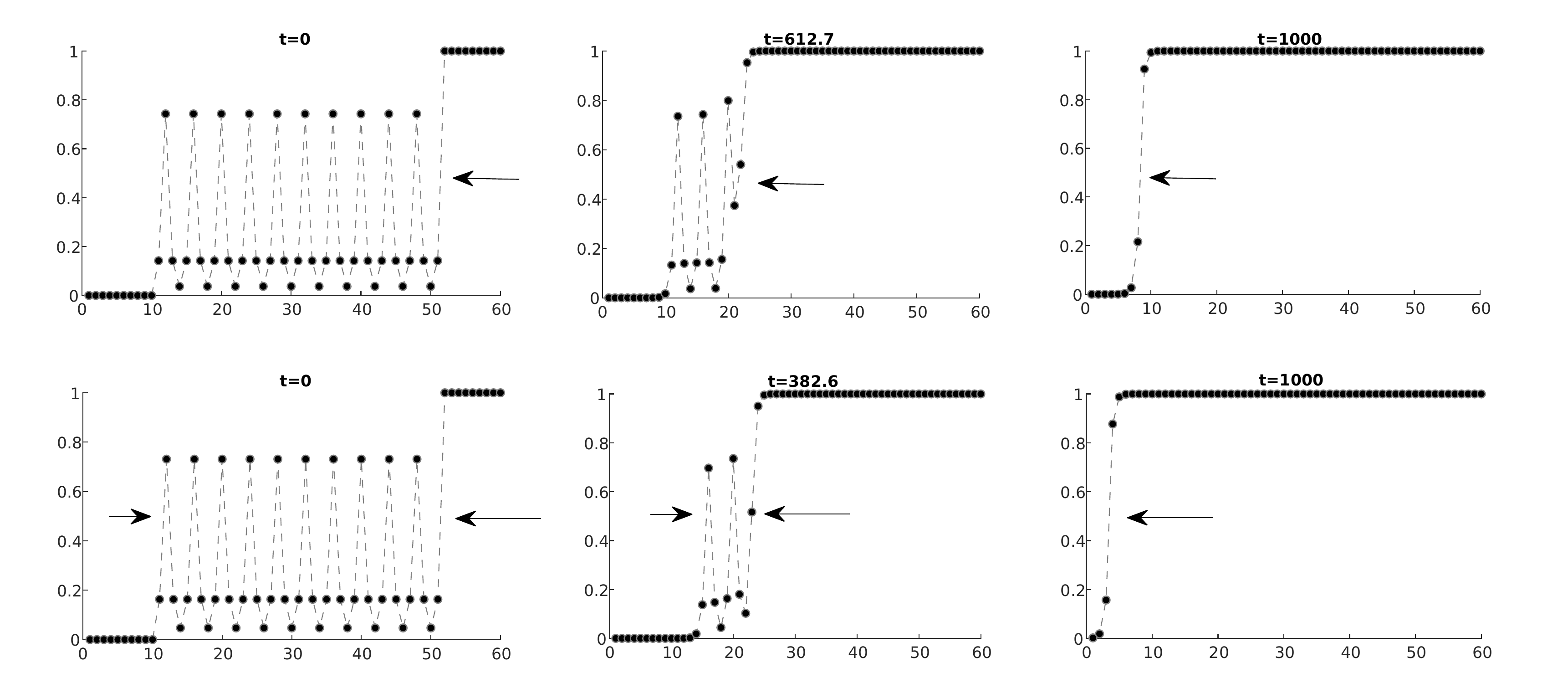}
\caption{
These panels describe two simulations of the 
LDE \sref{eq:mcr:nagumo:lde} that feature
a collision between a $[\mathfrak{0} \to \mathfrak{0001}]$ wave
and a $[\mathfrak{0001} \to \mathfrak{1}]$ wave.
The top three panels feature a collision
of type $A$, with $a = 0.378$ and $d = 0.058$.
The quadrichromatic buffer zone is invaded
from the right, forming a travelling monochromatic
wave once it is extinguished.
The bottom three panels
feature a collision of type $B$,
with $a=0.37$ and $d=0.0625$.
Here the left part of the buffer zone
is first pulled towards zero by the 
incoming wave on the left,
but then gets pulled towards one by the
final travelling monochromatic wave.
} \label{quad.fig:LDE_A_B}
\end{center}
\end{figure}

\paragraph{The $\Gamma_{[\mathfrak{0111}]}$ threshold}
In view of the symmetries discussed above we consider solutions where
the second and fourth component are equal. In particular,
we consider the problem
\begin{equation}
    G\big( ( x , 1 + y, 1 + z, 1 + y) ; a , d \big) = 0 ,
\end{equation}
which can be written as
\begin{equation}
    (H_1, H_2, H_3)( x , y, z ; a,d ) = 0
\end{equation}
with
\begin{equation}
\begin{array}{lcl}
H_{1}(x,y,z;a,d) & =  & 2d(1+y-x) + x(1-x)(x-a), \\[0.2cm]
H_{2}(x,y,z;a,d) & =  & d(x+z -2y -1) - y(1+y)(y+1-a), \\[0.2cm]
H_{3}(x,y,z;a,d) & =  & 2d(y-z) - z(1+z)(z+1-a).    
\end{array}
\end{equation}
Notice that $H_2$ and $H_3$ feature terms of order $O(y)$ respectively $O(z)$,
which corresponds with the fact that the root $g(1 ;a) = 0$ is simple when 
$a = 0$. In addition, $H_3$ is independent of $x$. Setting
$H_3 = 0$ hence allows us to write $z = z_*(y; a, d)$,
which can be substituted into $H_2 =0 $ to yield
$y = y_*( x ; a, d)$. Plugging these expressions
into $H_1$ by writing
\begin{equation}
\tilde{H}_1(x ; a, d) = H_1\big(x, y_*(x; a, d), z_*(y; a, d); a, d\big),
\end{equation}
we find
\begin{equation}
\tilde{H}_1(x; a, d) = x^2 + O\big( x^3 +  a x  + d  \big).
\end{equation}
This allows us to uncover
the saddle-node bifurcations in a fashion analogous
to \cite{HJHBICHROM}.
In particular, we obtain the expansion
\begin{equation}
d_{[\mathfrak{0111}]}(a)= \frac{a^2}{8} +\frac{a^4}{64} +O(a^5),
\end{equation}
together with
\begin{equation}
    \gu_{\mathfrak{0111}}\big(a ; d_{[\mathfrak{0111}]}(a) \big) 
 = \big( \frac{a}{2},
 1-\frac{a^2}{8} -\frac{a^3}{16},
 1 , 
 1-\frac{a^2}{8} -\frac{a^3}{16}
 \big) + O( a^4).
\end{equation}

\paragraph{The $\Gamma_{[\mathfrak{0101}]} = 
  \Gamma_{[\mathfrak{01}]}$ threshold}
The discussion above implies that this threshold is identical
to the corresponding threshold for the bichromatic case $n = 2$.
We can hence copy the results from \cite[Prop 3.6]{HJHBICHROM}
and write
\begin{equation}
d_{[\mathfrak{01}]}(a)= \frac{a^2}{8} +\frac{a^4}{32} +O(a^5),
\end{equation}
together with
\begin{equation}
\label{eq:qdr:g:u:01}
    \gu_{\mathfrak{01}}\big(a ; d_{[\mathfrak{01}]}(a) \big) 
 = \big( \frac{a}{2},
 1-\frac{a^2}{4} -\frac{a^3}{8}
  \big) + O( a^4).
\end{equation}

\paragraph{The $\Gamma_{[\mathfrak{0011}]}$ threshold}
The identity \sref{eq:qdr:red:sys:aa:bb:case}
allows us to write
\begin{equation}
d_{[\mathfrak{0011}]}(a) = 2 d_{[\mathfrak{01}]}(a)= \frac{a^2}{4} +\frac{a^4}{16} +O(a^5).
\end{equation}
In addition, we can reuse
the expressions \sref{eq:qdr:g:u:01}
to find
\begin{equation}
  \gu_{\mathfrak{0011}}\big(a ; d_{[\mathfrak{0011}]}(a) \big)
  = \big( \frac{a}{2}, \frac{a}{2},
 1-\frac{a^2}{4} -\frac{a^3}{8},
 1-\frac{a^2}{4} -\frac{a^3}{8}
  \big) + O( a^4).
\end{equation}

\paragraph{The $\Gamma_{[\mathfrak{0001}]}$ threshold}
The symmetries discussed above allow us to consider solutions where
the first and third component are equal. In particular,
we consider the problem
\begin{equation}
    G\big( ( x , y, x, 1 + z) ; a , d \big) = 0 ,
\end{equation}
which can be written as
\begin{equation}
    (H_1, H_2, H_3)( x , y, z ; a,d ) = 0
\end{equation}
with
\begin{equation}
\begin{array}{lcl}
H_{1}(x,y,z;a,d) & =  & d(y+z+1-2x) + x(1-x)(x-a), \\[0.2cm]
H_{2}(x,y,z;a,d) & =  & 2d(x-y) +y(1-y)(y-a), \\[0.2cm]
H_{3}(x,y,z;a,d) & =  & 2d(x-z-1) - z(1+z)(z+1-a).    
\end{array}
\end{equation}
Notice that $H_3$ features a term of order $O(z)$
and is independent of $y$. In fact,
it is very similar to \cite[Eq. (3.30)]{HJHBICHROM},
which allows us to write
\begin{equation}
 z = z_*(x;a,d)=-2d -2 ad + 2d x + O ( d^2 + d a^2 ) .
\end{equation}
However $H_2$ features 
$O\big((a+d)y\big)$ terms, which prevents
us from expressing $y$ in terms of $x$ as before.
This corresponds with the fact that the root $g(0;a) = 0$ is double
at $a = 0$. On the other hand,
setting $H_2 = 0$ and introducing the scalings
\begin{equation}
y =  a \tilde{y},
\qquad
d = d_*(a,\tilde{d}) = \frac{1}{4}a^2( 1 + \tilde{d})
\end{equation} 
does allow us to write
\begin{equation}
  x = x_*(\tilde{y}; a, \tilde{d}) = 
  a \tilde{y}- \frac{2 \tilde{y}}{1 + 4\tilde{d}} 
    (1 - a \tilde{y})(\tilde{y} - 1).
\end{equation}
Substituting these expressions into $H_1$,
we write
\begin{equation}
\tilde{H}_1( \tilde{y} ; a, \tilde{d} )
 = H_1\Big( x_*\big(\tilde{y} ; a, \tilde{d}\big) , a \tilde{y},
 z_*\big(x_*(\tilde{y} ; a, \tilde{d}\big); a, d_*(a , \tilde{d}) \big) ;
   a ,  d_*(a , \tilde{d}) \Big)
\end{equation}
and find
\begin{equation}
\tilde{H}_1(\tilde{y}; a, \tilde{d} ) = 
  4 \tilde{y}^2 + O \big( \tilde{y}^3 +  (a +\tilde{d}) \tilde{y}^2 +  a \tilde{y} +  a^2 \big).
\end{equation}
The saddle-node bifurcations can now be unfolded by examining the
terms in this equation using the procedure in \cite{HJHBICHROM}.
In particular, we find
\begin{equation}
d_{[\mathfrak{0001}]}(a) =\frac{a^2}{4} +\frac{a^3}{8}+O(a^4),
\end{equation}
together with
\begin{equation}
    \gu_{\mathfrak{0001}}\big(a ; d_{[\mathfrak{0001}]}(a) \big) 
 = \big( \frac{a}{2} +\frac{a^2}{8} +\frac{3a^3}{16},
 \frac{a^2}{4} +\frac{a^3}{8},
 \frac{a}{2} +\frac{a^2}{8} +\frac{3a^3}{16},
 1-\frac{a^2}{2} -\frac{a^3}{2}
 \big) + O( a^4).
\end{equation}
The main point of interest here is that $d_{[\mathfrak{0001}]}(a)$ contains a cubic term. In fact,
our expansion here agrees with the expansion
of the formula \cite[Eq. (5.1)]{VL28}, which provides
a (non-sharp) upper bound for the values of $d$ where
monochromatic waves are pinned. As indicated in Figure \ref{quad.fig:speed_thres_complete}, our numerical results
confirm that $\Omega_{[\mathfrak{0001}]}$ intersects
the region in $(a,d)$ space where monochromatic waves can
travel.

\subsection{Wave connections}

As in the trichromatic case,
a visual inspection confirms that assumptions
${\rm (H\Omega 1)}$, ${\rm (H\Omega 2)}$ and ${\rm (HS)}$ are satisfied.
The connections predicted by 
Theorem \ref{thm:mcr:waves} are depicted
in Figure \ref{fig:diag:tri:quad:conn}. 
In Figure \ref{quad.fig:speed_thres_complete}
we provide the numerically computed minimal values for $d$
for which the waves connecting to and from the spatially
homogeneous equilibria $[\mathfrak{0}]$ and $[\mathfrak{1}]$
have a non-zero speed. 
The novel feature here is that there is overlap
with the region where the monochromatic $[\mathfrak{0} \to \mathfrak{1}]$
wave has non-zero speed.  This allows
for situations where the end-product of a collision between
two quadrichromatic wave is no longer a pinned monochromatic wave
but in fact a travelling monochromatic wave.


The parameter regions where  various types of collisions  can occur are
described in Figure \ref{quad.fig:speed_thres_zoom}.
Types $C$-$F$ closely resemble those encountered in
the bichromatic and trichromatic cases. Types $A$ and $B$ are new
and indeed feature travelling monochromatic end-products.
Two examples with snapshots of such collisions are provided in 
Figure \ref{quad.fig:LDE_A_B}.

\section{Proof of Theorem \ref{thm:mcr:waves}}
\label{sec:pmr}

Here we provide the proof of our main result,
which allows us to establish the existence of wave
connections between equilibria by 
simply comparing their types.

We first show that a pair of distinct ordered stationary solutions cannot have two equal components.
Throughout this section we assume that expressions
such as $i-1$ or $i+1$ should be evaluated within the modulo arithmetic on indices $\{1,2,\ldots,n\}$.

\begin{lemma}
\label{l:one:coordinate:implies:equatity}
Assume that $\gu,\gv \in \Real^n$ satisfy
$G(\gu; a, d) = G(\gv;a,d) =0$ for some pair 
$a \in (0,1)$ and $d > 0$. Assume furthermore that
$\gu \leq \gv$ and that $\gu_i = \gv_i$ for some 
$i \in \{1, \ldots, n\}$.
Then in fact $\gu=\gv$.
\end{lemma}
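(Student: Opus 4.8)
The plan is to prove this via a discrete strong maximum principle adapted to the cyclic coupling encoded in $G$. First I would introduce the difference vector $\gz = \gv - \gu$, which is nonnegative componentwise by the hypothesis $\gu \le \gv$, and reduce the claim to showing $\gz = 0$. The crucial structural feature I would exploit is that each component of $G$ contains its reaction term $g(\,\cdot\,;a)$ evaluated at a single site, so the nonlinearity cancels in the difference of the two equilibrium equations precisely at those indices where $\gu$ and $\gv$ already agree — independently of any monotonicity of $g$.

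Concretely, at the index $i$ with $\gu_i = \gv_i$, I would subtract the $i$-th component of $G(\gv;a,d)=0$ from that of $G(\gu;a,d)=0$ to obtain
\[
 d\big(\gz_{i-1} - 2\gz_i + \gz_{i+1}\big) + \big[g(\gv_i;a) - g(\gu_i;a)\big] = 0 .
\]
Here both $\gz_i = 0$ and $g(\gv_i;a) = g(\gu_i;a)$ hold, so the identity collapses to $d(\gz_{i-1} + \gz_{i+1}) = 0$. Since $d > 0$ and $\gz_{i-1}, \gz_{i+1} \ge 0$, I conclude $\gz_{i-1} = \gz_{i+1} = 0$. This is exactly the propagation of the contact set to neighbouring sites.

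The final step is a connectivity argument: the set $Z = \{\,i : \gz_i = 0\,\}$ is nonempty by assumption, and the computation above shows $i \in Z$ forces $i-1 \in Z$ and $i+1 \in Z$ (with indices read modulo $n$). Since the cyclic adjacency relating $i$ to $i \pm 1$ makes $\{1,\ldots,n\}$ connected, $Z$ must exhaust all indices, giving $\gz = 0$ and hence $\gu = \gv$. I expect no serious obstacle here; the only points requiring care are the exact cancellation of the reaction term (which relies on $\gu_i = \gv_i$ rather than on properties of $g$) and the use of $d > 0$ together with $\gz \ge 0$ to force both neighbouring differences to vanish. The hypothesis $d > 0$ is genuinely essential, since for $d = 0$ the lattice sites decouple and the statement fails.
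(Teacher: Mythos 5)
Your proof is correct and follows essentially the same route as the paper: both cancel the nonlinearity at the contact index $i$ (using $\gu_i=\gv_i$, not any property of $g$), use $d>0$ and the sign condition $\gu\le\gv$ to force the neighbouring components to agree, and then propagate this equality around the cycle. The only difference is cosmetic — you phrase it via the difference vector $\gz=\gv-\gu$ and a strong-maximum-principle viewpoint, while the paper manipulates the equilibrium equations directly — but the argument is identical in substance.
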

\begin{proof}
Since $\gu_i=\gv_i$ we have
\begin{equation}
0=d (\gu_{i-1}-2\gu_i+\gu_{i+1}) + g\big(\gu_{i}; a\big) = d (\gu_{i-1}-2\gv_i+\gu_{i+1}) + g\big(\gv_{i}; a\big) = d (\gv_{i-1}-2\gv_i+\gv_{i+1}) + g\big(\gv_{i}; a\big) ,
\end{equation}
which implies that
\begin{equation}
\gu_{i-1}+\gu_{i+1}= \gv_{i-1}+\gv_{i+1} .
\end{equation}
Since $\gu_{i-1}\leq\gv_{i-1}$ and $\gu_{i+1}\leq\gv_{i+1}$ we obtain $\gu_{i-1}=\gv_{i-1}$ and $\gu_{i+1}=\gv_{i+1}$. 
This argument can subsequently be repeated
a number of times to yield $\gu = \gv$. 
\end{proof}

The main ingredient in our proof of Theorem \ref{thm:mcr:waves}
is that the ordering of any pair of words from the
full set $\{ \mathfrak{0}, \mathfrak{a} , \mathfrak{1} \}^n$
and the stable subset $\{ \mathfrak{0},   \mathfrak{1} \}^n$
is preserved for the equilibria that have the corresponding
types.  For example, for $n = 4$ 
we have the partial ordering
\begin{figure}[H]
\begin{center}
\includegraphics[width=0.8\textwidth]{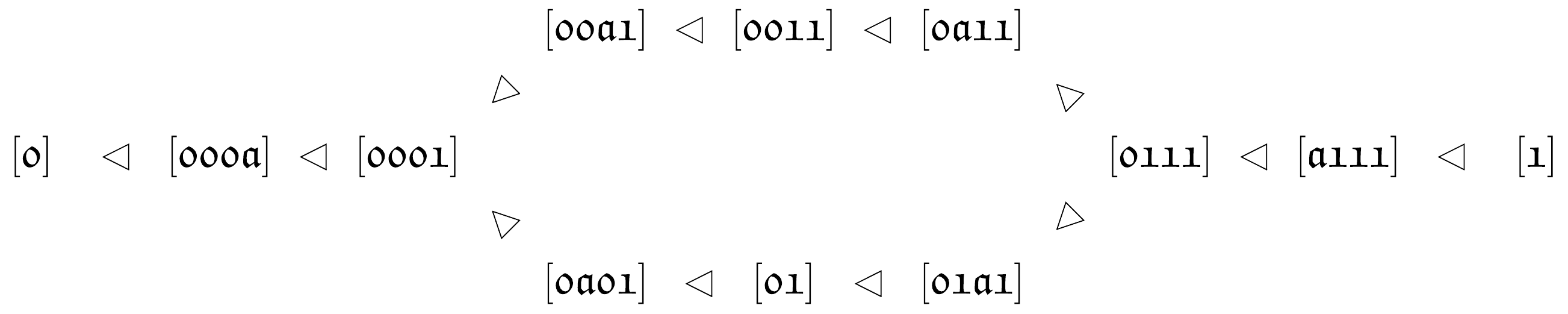}
\end{center}
\end{figure}

\noindent whereby each $[\gw_A] \triangleleft [\gw_B]$ 
connection in this diagram indicates that
$\gu_{\gw_A}(a,d) < \gu_{\gw_B}(a,d)$ whenever
$(a,d) \in \Omega_{\gw_A} \cap \Omega_{\gw_B}$ with $d > 0$.

\begin{lemma}\label{l:ordering}
Assume that ${\rm (H\Omega1)}$ and ${\rm (H\Omega2)}$ are satisfied
and consider a distinct pair $\gw_A,\gw_B \in \{\mathfrak{0},\mathfrak{a},\mathfrak{1}\}^n$ that 
admits the ordering
$\gw_A \le \gw_B$. Suppose furthermore that
at least one of these two words
is contained in $\{\mathfrak{0}, \mathfrak{1} \}^n$.
Then for any $(a,d) \in \Omega_{\gw_A} \cap \Omega_{\gw_B}$
we have the strict component-wise inequality
\begin{equation}
 \label{eq:pmr:ord:u:a:vs:b}
    \gu_{\gw_A}(a,d) < \gu_{\gw_B}(a,d) .
\end{equation}
\end{lemma}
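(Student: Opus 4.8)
The plan is to first reduce the strict inequality \sref{eq:pmr:ord:u:a:vs:b} to the corresponding non-strict bound. Indeed, suppose we have already established the component-wise inequality $\gu_{\gw_A}(a,d) \le \gu_{\gw_B}(a,d)$. Corollary \ref{cor:mcr:unq:props} guarantees that these two vectors are distinct, so they cannot be equal; since $d > 0$, the contrapositive of Lemma \ref{l:one:coordinate:implies:equatity} then forbids \emph{any} single coordinate from coinciding, which upgrades the bound to the strict inequality \sref{eq:pmr:ord:u:a:vs:b}. Everything therefore comes down to proving $\gu_{\gw_A}(a,d) \le \gu_{\gw_B}(a,d)$, and this same reasoning shows that at every point of $\Omega_{\gw_A}\cap\Omega_{\gw_B}$ with $d>0$ the non-strict bound is automatically strict wherever it holds.

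First I would treat the regime of small $d$ by a discrete maximum principle. Writing $p = \gu_{\gw_A}(a,d)$, $q = \gu_{\gw_B}(a,d)$ and $r = q - p$, subtracting the two identities $G(p;a,d)=G(q;a,d)=0$ and applying the mean value theorem to $g(\,\cdot\,;a)$ yields, for each $i$,
\begin{equation}
d\,\big(r_{i-1} - 2 r_i + r_{i+1}\big) + g'(\xi_i; a)\, r_i = 0
\end{equation}
for some $\xi_i$ between $p_i$ and $q_i$. Suppose $m := \min_i r_i < 0$ and let $i_0$ be a minimizing index. Since $r_{i_0\pm 1}\ge m$, the discrete Laplacian term at $i_0$ is non-negative, forcing $g'(\xi_{i_0};a)\,r_{i_0}\le 0$ and hence $g'(\xi_{i_0};a)\ge 0$. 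Here I exploit the assumption that $\gw_A$ or $\gw_B$ lies in $\{\mathfrak{0},\mathfrak{1}\}^n$: at any index where the two words agree the common symbol is then $\mathfrak{0}$ or $\mathfrak{1}$, so for $d$ small both $p_{i_0}$ and $q_{i_0}$ lie near $0$ or near $1$, placing $\xi_{i_0}$ in a region where $g'(\,\cdot\,;a)<0$, a contradiction. At any index where the words differ, $r_{i_0}\big|_{d=0} = \big((\gw_B)_{\mid a}\big)_{i_0} - \big((\gw_A)_{\mid a}\big)_{i_0} > 0$, so $r_{i_0}>0$ for $d$ small, again contradicting $r_{i_0}=m<0$. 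Hence $m\ge 0$, and by the reduction above $\gu_{\gw_A}<\gu_{\gw_B}$ for all $(a',d')\in\Omega_{\gw_A}\cap\Omega_{\gw_B}$ close to $(a,0)$ with $d'>0$.

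I would then propagate this ordering to the prescribed $(a,d)$ by continuation, using ${\rm (H\Omega1)}$. Since $(a,0)$ and $(a,d)$ both lie in the connected open set $\Omega_{\gw_A}\cap\Omega_{\gw_B}$, I select a path $t\mapsto(\alpha(t),\delta(t))$ joining them with $\delta(t)>0$ on $(0,1]$, and set
\begin{equation}
m(t) = \min_i\Big[ \gu_{\gw_B}\big(\alpha(t),\delta(t)\big) - \gu_{\gw_A}\big(\alpha(t),\delta(t)\big) \Big]_i .
\end{equation}
The key point is that $m(t)\neq 0$ for every $t\in(0,1]$: if $m(t)=0$ then $\gu_{\gw_A}\le\gu_{\gw_B}$ with equality in some coordinate, so Lemma \ref{l:one:coordinate:implies:equatity} (valid since $\delta(t)>0$) would force $\gu_{\gw_A}=\gu_{\gw_B}$, contradicting Corollary \ref{cor:mcr:unq:props}. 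Thus the continuous function $m$ is nowhere zero on the connected interval $(0,1]$ and so keeps a constant sign there; as $m(t)>0$ for small $t>0$ by the maximum-principle step, we get $m(1)>0$, which is \sref{eq:pmr:ord:u:a:vs:b}.

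The hard part is precisely this passage to general $d$. The maximum principle alone does not close the argument, because at indices where $p_i$ and $q_i$ straddle the subinterval on which $g'(\,\cdot\,;a)$ is positive the sign information at a putative negative minimum is lost; this is why the continuation scheme is needed, with the distinctness of Corollary \ref{cor:mcr:unq:props} and the one-coordinate rigidity of Lemma \ref{l:one:coordinate:implies:equatity} combining to keep the ordering from ever degenerating along the path. A minor technical point is the choice of a path that remains in $d>0$ away from its initial endpoint; since $\Omega_{\gw_A}\cap\Omega_{\gw_B}$ is open in $\mathcal{H}$, any approach to the boundary segment $d=0$ can be rerouted through nearby points with $d>0$.
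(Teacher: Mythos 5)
Your proof is correct, and while its overall skeleton (establish the ordering near $d=0$, then propagate along a path in $\Omega_{\gw_A}\cap\Omega_{\gw_B}$ using the rigidity of Lemma \ref{l:one:coordinate:implies:equatity} together with the distinctness from Corollary \ref{cor:mcr:unq:props}) mirrors the paper's, the base step is handled by a genuinely different argument. The paper works on a path with $\delta(t)=t$ near $t=0$ and performs an induction on the $t$-derivatives of the two equilibrium branches at $t=0$, organized by the distance $\ell_i$ to the nearest index where the words differ; this shows the $i$-th components separate at order exactly $k=\ell_i$ and hence yields the \emph{strict} ordering directly for small $t>0$. You instead subtract the two equilibrium equations, apply the mean value theorem to $g$, and run a discrete maximum principle: a negative minimum of the difference $r=\gu_{\gw_B}-\gu_{\gw_A}$ would force $g'(\xi_{i_0};a)\ge 0$ at the minimizing index, which for small $d$ is impossible both at agreeing indices (where the hypothesis that one word lies in $\{\mathfrak{0},\mathfrak{1}\}^n$ puts both equilibria near $0$ or near $1$, so $g'<0$ — the same place the paper uses this hypothesis) and at differing indices (where $r_{i_0}$ stays bounded below by roughly $\min(a,1-a)$). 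This only gives the non-strict bound, which you then upgrade to strict via Lemma \ref{l:one:coordinate:implies:equatity}; the paper never needs this upgrade step because its induction is already strict. What each approach buys: yours is more elementary — no higher-order implicit differentiation, no combinatorial induction — and it cleanly isolates the role of each hypothesis; the paper's computation yields finer asymptotic information (components of the two branches split at order $d^{\ell_i}$), which is not needed for the lemma itself. A small point in your favour: you explicitly address rerouting the path so that $\delta(t)>0$ away from the initial endpoint, a technicality the paper's proof needs as well (Lemma \ref{l:one:coordinate:implies:equatity} requires $d>0$ at the putative failure point $t_*$) but leaves implicit.
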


\begin{proof}
Fixing $(a,d) \in \Omega_{\gw_A} \cap \Omega_{\gw_B}$,
we note that ${\rm (H\Omega 1)}$ and ${\rm (H\Omega 2)}$
allow us to pick a curve
\begin{equation}
    [0,1] \ni t \mapsto
      \big( \gv_A(t) , \gv_B(t), \alpha(t), \delta(t) \big) \in 
      [0,1]^n \times [0,1]^n \times (0,1) \times [0, \infty)
\end{equation}
so that we have
\begin{equation}
\begin{array}{lcl}
    (\gv_A, \gv_B, \alpha, \delta)(0) & = & 
     \big( [\gw_A]_{\mid a},
      [\gw_B]_{\mid a} , a, 0 \big), \\[0.2cm]
    (\gv_A, \gv_B, \alpha, \delta)(1) & = & 
    \big( 
      \gu_{\gw_A}(a,d),
      \gu_{\gw_B}(a,d)  , a, d \big), \\[0.2cm]  
\end{array}
\end{equation}
while the inclusion
\begin{equation}
  \big(\alpha(t), \delta(t) \big) \in \Omega_{\gw_A} \cap \Omega_{\gw_B}    
\end{equation}
and the identities
\begin{equation}
G\big( \gv_A(t) ; \alpha(t), \delta(t) \big) = 
G\big( \gv_B(t) ; \alpha(t), \delta(t) \big)
=0
\end{equation}
all hold for $0 \le t \le 1$.
By slightly modifying the path
and picking a small $\epsilon > 0$,
we can also  ensure that $\alpha(t) = a$ and $\delta(t) = t$
for all $t \in [0, \epsilon)$.

Upon introducing the graph Laplacian $B: \Real^n \to \Real^n$
and the nonlinearity $\Psi: \Real^n \to \Real^n$ that act as
%
\begin{equation}
(B \gu )_i = \gu_{i-1}  -2 \gu_i + \gu_{i+1},
\qquad
\Psi( \gu)_i = g( \gu_i; a),
\end{equation}
we see that 
\begin{equation}\label{eq:proof:implicitG}
0 = G (\gv_{\#}(t); a, t) =  t B \gv_{\#}(t) + \Psi\big(\gv_{\#}(t) \big)    
\end{equation}
for $t$ small and $\# \in \{A, B\}$.
Taking implicit derivatives of \eqref{eq:proof:implicitG}, we find
\begin{equation}
\label{eq:pmr:impl:fnc:high:deriv}
t B \frac{d^k}{d t^k} \gv_{\#}(t) + k B \frac{d^{k-1}}{d t^{k-1}} \gv_{\#}(t)
     + \mathcal{R}^{(k)}_{\#}(t)    
      = - D \Psi\big( \gv_{\#}(t) \big) \frac{d^k}{dt^k} \gv_{\#}(t),
\end{equation}
in which we have defined
\begin{equation}
\mathcal{R}^{(k)}_{\#}(t)    
 = \sum_{j=1}^{k-1} {k-1 \choose j-1} 
      \Big[\frac{d^{k - j}}{dt^{k - j}} D \Psi\big( \gv_{\#}(t) \big) \Big]\frac{d^j}{dt^j}
      \gv_{\#}(t)
\end{equation}
for $k \ge 2$, setting this expression to zero for $k = 1$.

For any index $i$ we define
the quantity 
\begin{equation}
\ell_i = \min\{ j' \ge 0: (\gw_{A})_{i + j'} < (\gw_{B})_{i + j'}
\hbox{ or } (\gw_{A})_{i-j'} < (\gw_{B})_{i - j'}
\},
\end{equation}
which measures the distance to the closest index
where $\gw_A$ and $\gw_B$ are unequal.
We now claim that for any $k \ge 0$ we have
\begin{equation}
    \frac{d^k}{d t^k} (\gv_{A})_i(0)
     \le \frac{d^k}{d t^k} (\gv_{B})_i(0)
\end{equation}
if $\ell_i \ge k$, with the inequality being
strict if and only if $\ell_i = k$.
For $k = 0$ this is obvious. Assuming this holds
for $k-1$, consider any index with $\ell_i \ge k \ge 1$.
Our alphabet assumption implies that
\begin{equation}
    (\gw_A)_i = (\gw_B)_i \neq \mathfrak{a},
\end{equation}
which implies that the $ii$-component of the 
two diagonal
matrices $D \Psi\big( \gv_{A}(0) \big)$
and $D \Psi\big( \gv_{B}(0) \big)$
are strictly negative; see \sref{eq:mcr:diag:matrix:d1g}.
In addition, 
our induction hypothesis implies that 
\begin{equation}
    \mathcal{R}_A^{(k)}(0) = \mathcal{R}_B^{(k)}(0).
\end{equation}
By definition, we have
\begin{equation}
\ell_{i \pm 1} \ge \ell_{i} - 1 \ge k- 1.
\end{equation}
In addition, we have $\ell_i = k$ if and only if
 $\ell_{i+1} = k-1$ or $\ell_{i-1} = k- 1$ holds.
Our induction hypothesis hence implies
\begin{equation}
    \big(B \frac{d^{k-1}}{d t^{k-1}} \gv_A(0) \big)_i
    \le \big( B \frac{d^{k-1}}{d t^{k-1}} \gv_B(0) \big)_i,
\end{equation}
with strict inequality if and only if $\ell_i = k$.
Our claim now follows immediately
from \sref{eq:pmr:impl:fnc:high:deriv}.

The argument above shows that $\gv_A(t) < \gv_B(t)$
for all $t \in (0, \epsilon)$. If \sref{eq:pmr:ord:u:a:vs:b} fails to hold, this hence means
that there exists $t_* \in [\epsilon, 1]$
for which $\gv_A(t_*) \le \gv_B(t_*)$,
with also $\big(\gv_A(t_*)\big)_i = \big(\gv_B(t_*)\big)_i$
for some $i \in \{1, \ldots, n\}$.
Lemma \ref{l:one:coordinate:implies:equatity}
now implies $\gv_A(t_*) = \gv_B(t_*)$
and hence 
\begin{equation}
    \gu_{\gw_A}\big( \alpha(t_*), \delta(t_*) \big)
    = \gu_{\gw_B}\big( \alpha(t_*), \delta(t_*) \big),
\end{equation}
which violates Corollary \ref{cor:mcr:unq:props}.
\end{proof}

By combining Lemma's \ref{l:one:coordinate:implies:equatity} and \ref{l:ordering} we can control all the (marginally) stable equilibria
in the box $[\gu_{\gw_-}, \gu_{\gw_+}]$. This allows us to finally prove our main result.

\begin{proof}[Proof of Theorem \ref{thm:mcr:waves}]
Pick any distinct pair $\gw_\pm \in \{ \mathfrak{0}, \mathfrak{1} \}^n$ with $\gw_- \le  \gw_+$
and any $(a,d) \in \Omega_{\gw_-} \cap \Omega_{\gw_+}$
with $d > 0$. Lemma \ref{l:ordering}
implies that the cuboid 
\begin{equation}
    \mathcal{K} = \{ \gu \in \Real^n: 
    \gu_{\gw_-}(a,d) \le \gu \le \gu_{\gw_+}(a,d) \} 
    \subset [0,1]^n
\end{equation}
has non-empty
volume. In addition, for any $\gw \in \{ \mathfrak{0}, \mathfrak{1} \}^n$ that does not satisfy 
$\gw_- \le \gw \le \gw_+$
and for which $(a,d) \in \overline{\Omega}_{\gw}$,
we have $u_{\gw}(a,d) \notin \mathcal{K}$
on account of Lemma \ref{l:one:coordinate:implies:equatity},
the connectedness of $\Omega_{\gw_\pm} \cap \Omega_{\gw}$
and continuity considerations.

In view of (HS), all equilibria inside 
the cube $\mathcal{K}$ besides the two corner points $\gu_{\gw_\pm}(a,d)$
hence have a strictly positive eigenvalue.
The existence of the wave $(c,\Phi)$ now follows
from \cite[Thm. 6]{CHENGUOWU2008}.
\end{proof}

\bibliographystyle{klunumHJ}

\begin{thebibliography}{000}

\bibitem{Aronson1975nonlinear}
D.~G. Aronson and H.~F. Weinberger (1975), Nonlinear diffusion in population
  genetics, combustion, and nerve pulse propagation.
\newblock In: {\em Partial differential equations and related topics}.
\newblock Springer, pp. 5--49.

\bibitem{BatesDiscConv}
P.~W. Bates and A. Chmaj (1999), A {D}iscrete {C}onvolution {M}odel for {P}hase
  {T}ransitions.
\newblock {\em Arch. Rational Mech. Anal.} {\bf 150}, 281--305.

\bibitem{bellsky2013adiabatic}
T. Bellsky, A. Doelman, T.~J. Kaper and K. Promislow (2013), Adiabatic
  stability under semi-strong interactions: the weakly damped regime.
\newblock {\em Indiana University Mathematics Journal} pp. 1809--1859.

\bibitem{blot1991global}
J. Blot (1991), On global implicit functions.
\newblock {\em Nonlinear Analysis: Theory, Methods \& Applications} {\bf
  17}(10), 947--959.

\bibitem{BRUC2011}
M. Brucal-Hallare and E.~S. Van~Vleck (2011), Traveling {W}avefronts in an
  {A}ntidiffusion {L}attice {N}agumo {M}odel.
\newblock {\em SIAM J. Appl. Dyn. Syst.} {\bf 10}, 921--959.

\bibitem{CMPVV}
J.~W. Cahn, J. Mallet-Paret and E.~S. Van~Vleck (1999), Traveling {W}ave
  {S}olutions for {S}ystems of {ODE}'s on a {Two}-{D}imensional {S}patial
  {L}attice.
\newblock {\em SIAM J. Appl. Math.} {\bf 59}, 455--493.

\bibitem{CAHNNOV1994}
J.~W. Cahn and A. Novick-Cohen (1994), Evolution {E}quations for {P}hase
  {S}eparation and {O}rdering in {B}inary {A}lloys.
\newblock {\em J. Stat. Phys.} {\bf 76}, 877--909.

\bibitem{CAHNVLECK1999}
J.~W. Cahn and E.~S. Van~Vleck (1999), On the {C}o-existence and {S}tability of
  {T}rijunctions and {Q}uadrijunctions in a {S}imple {M}odel.
\newblock {\em Acta Materialia} {\bf 47}, 4627--4639.

\bibitem{Celli1970motion}
V. Celli and N. Flytzanis (1970), Motion of a screw dislocation in a crystal.
\newblock {\em Journal of Applied Physics} {\bf 41}(11), 4443--4447.

\bibitem{CHENGUOWU2008}
X. Chen, J.~S. Guo and C.~C. Wu (2008), Traveling {W}aves in {D}iscrete
  {P}eriodic {M}edia for {B}istable {D}ynamics.
\newblock {\em Arch. Ration. Mech. Anal.} {\bf 189}, 189--236.

\bibitem{chirilus2015butterfly}
M. Chirilus-Bruckner, A. Doelman, P. van Heijster and J.~D. Rademacher (2015),
  Butterfly catastrophe for fronts in a three-component reaction--diffusion
  system.
\newblock {\em Journal of Nonlinear Science} {\bf 25}(1), 87--129.

\bibitem{Dmitriev2000domain}
S.~V. Dmitriev, K. Abe and T. Shigenari (2000), Domain wall solutions for EHM
  model of crystal:: structures with period multiple of four.
\newblock {\em Physica D: Nonlinear Phenomena} {\bf 147}(1-2), 122--134.

\bibitem{doelman2007nonlinear}
A. Doelman, T.~J. Kaper and K. Promislow (2007), Nonlinear asymptotic stability
  of the semistrong pulse dynamics in a regularized Gierer--Meinhardt model.
\newblock {\em SIAM Journal on Mathematical Analysis} {\bf 38}(6), 1760--1787.

\bibitem{ELM2006}
C.~E. Elmer (2006), Finding {S}tationary {F}ronts for a {D}iscrete {N}agumo and
  {W}ave {E}quation; {C}onstruction.
\newblock {\em Physica D} {\bf 218}, 11--23.

\bibitem{EVV}
C.~E. Elmer and E.~S. Van~Vleck (2002), A {V}ariant of {N}ewton's {M}ethod for
  the {C}omputation of {T}raveling {W}aves of {B}istable
  {D}ifferential-{D}ifference {E}quations.
\newblock {\em J. Dyn. Diff. Eq.} {\bf 14}, 493--517.

\bibitem{EVV2005AppMath}
C.~E. Elmer and E.~S. Van~Vleck (2005), {S}patially {D}iscrete
  {F}itz{H}ugh-{N}agumo {E}quations.
\newblock {\em SIAM J. Appl. Math.} {\bf 65}, 1153--1174.

\bibitem{Faver2017nanopteron}
T.~E. Faver (2017), Nanopteron-stegoton traveling waves in spring dimer
  {F}ermi-{P}asta-{U}lam-{T}singou lattices.
\newblock {\em arXiv preprint arXiv:1710.07376}.

\bibitem{Faver2018exact}
T.~E. Faver and J.~D. Wright (2018), Exact {D}iatomic
  {F}ermi--{P}asta--{U}lam--{T}singou {S}olitary {W}aves with {O}ptical {B}and
  {R}ipples at {I}nfinity.
\newblock {\em SIAM Journal on Mathematical Analysis} {\bf 50}(1), 182--250.

\bibitem{Fife1977}
P.~C. Fife and J.~B. McLeod (1977), The approach of solutions of nonlinear
  diffusion equations to travelling front solutions.
\newblock {\em Arch. Ration. Mech. Anal.} {\bf 65}(4), 335--361.

\bibitem{Gallay2007variational}
T. Gallay, E. Risler et~al. (2007), A variational proof of global stability for
  bistable travelling waves.
\newblock {\em Differential and integral equations} {\bf 20}(8), 901--926.

\bibitem{HOFFMPcrys}
A. Hoffman and J. Mallet-Paret (2010), Universality of {C}rystallographic
  {P}inning.
\newblock {\em J. Dyn. Diff. Eq.} {\bf 22}, 79--119.

\bibitem{HW11}
A. Hoffman and J.~D. Wright (2011), Exit manifolds for lattice differential
  equations.
\newblock {\em Proc. Roy. Soc. Edinburgh Sect. A} {\bf 141}(1), 77--92.

\bibitem{Hoffman2017nanopteron}
A. Hoffman and J.~D. Wright (2017), Nanopteron solutions of diatomic
  {F}ermi--{P}asta--{U}lam--{T}singou lattices with small mass-ratio.
\newblock {\em Physica D: Nonlinear Phenomena} {\bf 358}, 33--59.

\bibitem{HJHBICHROM}
H.~J. Hupkes, L. Morelli and P. Stehl{\'\i}k (2018), Bichromatic travelling
  waves for lattice Nagumo equations.
\newblock {\em arXiv preprint arXiv:1805.10977}.

\bibitem{HJH2011}
H.~J. Hupkes, D. Pelinovsky and B. Sandstede (2011), Propagation failure in the
  discrete {N}agumo equation.
\newblock {\em Proc. Amer. Math. Soc.} {\bf 139}(10), 3537--3551.

\bibitem{HJHVL2005}
H.~J. Hupkes and S.~M. Verduyn-Lunel (2005), {A}nalysis of {N}ewton's {M}ethod
  to {C}ompute {T}ravelling {W}aves in {D}iscrete {M}edia.
\newblock {\em J. Dyn. Diff. Eq.} {\bf 17}, 523--572.

\bibitem{VL28}
J.~P. Keener (1987), Propagation and its {F}ailure in {C}oupled {S}ystems of
  {D}iscrete {E}xcitable {C}ells.
\newblock {\em SIAM J. Appl. Math.} {\bf 47}, 556--572.

\bibitem{VL30}
J. Mallet-Paret (1996), Spatial {P}atterns, {S}patial {C}haos and {T}raveling
  {W}aves in {L}attice {D}ifferential {E}quations.
\newblock In: {\em Stochastic and Spatial Structures of Dynamical Systems},
  Royal Netherlands Academy of Sciences. Proceedings, Physics Section. Series
  1, Vol.~45.
\newblock Amsterdam, pp. 105--129.

\bibitem{MPB}
J. Mallet-Paret (1999), The {G}lobal {S}tructure of {T}raveling {W}aves in
  {S}patially {D}iscrete {D}ynamical {S}ystems.
\newblock {\em J. Dyn. Diff. Eq.} {\bf 11}, 49--128.

\bibitem{MPCP}
J. Mallet-Paret (2001), Crystallographic {P}inning: {D}irection {D}ependent
  {P}inning in {L}attice {D}ifferential {E}quations.
\newblock {\em Preprint}.

\bibitem{morita2006entire}
Y. Morita and H. Ninomiya (2006), Entire solutions with merging fronts to
  reaction--diffusion equations.
\newblock {\em Journal of Dynamics and Differential Equations} {\bf 18}(4),
  841--861.

\bibitem{morita2009entire}
Y. Morita and K. Tachibana (2009), An entire solution to the Lotka--Volterra
  competition-diffusion equations.
\newblock {\em SIAM Journal on Mathematical Analysis} {\bf 40}(6), 2217--2240.

\bibitem{nishiura2003scattering}
Y. Nishiura, T. Teramoto and K.-I. Ueda (2003), Scattering and separators in
  dissipative systems.
\newblock {\em Physical Review E} {\bf 67}(5), 056210.

\bibitem{polacik2016propagating}
P. Pol{\'a}{\v{c}}ik (2016), Propagating terraces and the dynamics of
  front-like solutions of reaction-diffusion equations on R.
\newblock {\em Mem. Amer. Math. Soc., to appear}.

\bibitem{RANVIER1878}
L.~A. Ranvier (1878), {\em Le{\'c}ons sur l'Histologie du Syst{\`e}me Nerveux,
  par M. L. Ranvier, recueillies par M. Ed. Weber}.
\newblock F. Savy, Paris.

\bibitem{SATTINGER1977}
D. Sattinger (1977), Weighted norms for the stability of traveling waves.
\newblock {\em Journal of Differential Equations} {\bf 25}(1), 130--144.

\bibitem{schouten2018nonlinear}
W. Schouten and H. Hupkes (2018), Nonlinear stability of pulse solutions for
  the discrete FitzHugh-Nagumo equation with infinite-range interactions.
\newblock {\em arXiv preprint arXiv:1807.11736}.

\bibitem{Stehlik2017}
P. Stehl\'{\i}k (2017), Exponential number of stationary solutions for Nagumo
  equations on graphs.
\newblock {\em J. Math. Anal. Appl.} {\bf 455}(2), 1749--1764.

\bibitem{VAIN2009}
A. Vainchtein and E.~S. Van~Vleck (2009), Nucleation and {P}ropagation of
  {P}hase {M}ixtures in a {B}istable {C}hain.
\newblock {\em Phys. Rev. B} {\bf 79}, 144123.

\bibitem{Vainchtein2015propagation}
A. Vainchtein, E.~S. Van~Vleck and A. Zhang (2015), Propagation of periodic
  patterns in a discrete system with competing interactions.
\newblock {\em SIAM Journal on Applied Dynamical Systems} {\bf 14}(2),
  523--555.

\bibitem{van2010front}
P. van Heijster, A. Doelman, T.~J. Kaper and K. Promislow (2010), Front
  interactions in a three-component system.
\newblock {\em SIAM Journal on Applied Dynamical Systems} {\bf 9}(2), 292--332.

\bibitem{WEINBERGER1978}
H.~F. Weinberger (1979), Genetic wave propagation, convex sets, and
  semi-infinite programming.
\newblock In: {\em Constructive approaches to mathematical models ({P}roc.
  {C}onf. in honor of {R}. {J}. {D}uffin, {P}ittsburgh, {P}a., 1978)}.
\newblock New York: Academic Press, pp. 293--317.

\bibitem{Yagisita2003backward}
H. Yagisita (2003), Backward global solutions characterizing annihilation
  dynamics of travelling fronts.
\newblock {\em Publications of the Research Institute for Mathematical
  Sciences} {\bf 39}(1), 117--164.

\end{thebibliography}

\end{document}